\newcommand{\K}[1]{\mathbb{#1}}
\newcommand{\txt}[1]{\text{\normalfont{#1}}}
\newcommand{\C}[1]{\mathcal{#1}}
\newcommand{\RP}{\mathbb{R}\textrm{\normalfont{P}}^1}
\newcommand{\mean}{\mathbb{E}}
\newcommand{\G}[2]{\Gamma\left(\frac{#1}{#2}\right)}
\newcommand\blfootnote[1]{%
  \begingroup
  \renewcommand\thefootnote{}\footnote{#1}%
  \addtocounter{footnote}{-1}%
  \endgroup
}
\newtheorem{thm}{Theorem}
\newtheorem*{thm*}{Theorem}
\newtheorem{defi}{Definition}
\newtheorem{remark}{Remark}
\newtheorem{propo}{Proposition}
\newtheorem{lemma}{Lemma}
\newtheorem{cor}{Corollary}
\newenvironment{proofof}[1]{\par
  \pushQED{\qed}%
  \normalfont \topsep6\p@\@plus6\p@\relax
  \trivlist
  \item[\hskip\labelsep
    \itshape{Proof of #1\@addpunct{.}}]\ignorespaces
}{%
  \popQED\endtrivlist\@endpefalse
}
\title{The real polynomial eigenvalue problem is well conditioned on the average}
\author{Carlos Beltr\'an and Khazhgali Kozhasov}
\date{}
\begin{document}
\maketitle

\begin{changemargin}{1.5cm}{1.5cm}
{\bf\noindent Abstract.}
We study the average condition number for polynomial eigenvalues of collections of matrices drawn from various random matrix ensembles. In particular, we prove that polynomial eigenvalue problems defined by matrices with Gaussian entries are very well-conditioned on the average.
\end{changemargin}
\blfootnote{Carlos Beltr\'an is partially supported by grants MTM2017-83816-P and MTM2014-57590-P from Spanish Ministerio de Econom\'ia y Competitividad, as well as by grant SI01.64658 from Banco Santander and Universidad de Cantabria.}
\blfootnote{\textup{2010} \textit{Mathematics Subject Classification}: \textup{14Q20, 15A18, 15A22, 15B52, 65F15}} 

\section*{Introduction}
Following the ideas in \cite{SS1,BCSS}, we note that many different numerical problems can be described within the following simple general framework. We consider a space of {\em inputs} and a space of {\em outputs} denoted by $\mathcal{I}$ and $\mathcal{O}$ respectively, and some equation of the form $ev(i,o)=0$ stating when an output is a solution for a given input. Both $\mathcal{I}$ and $\mathcal{O}$, and the {\em solution variety}
\[
\mathcal{V}=\{(i,o)\in\mathcal{I}\times\mathcal{O}:\text{$o$ is an output to $i$}\}=\{(i,o)\in\mathcal{I}\times\mathcal{O}: ev(i,o)=0\}
\]
are frequently real algebraic or just semialgebraic sets. The numerical problem to be solved can then be written as ``given $i\in\mathcal{I}$, find $o\in\mathcal{O}$ such that $(i,o)\in\mathcal{V}$'', or ``find all $o\in\mathcal{O}$ such that $(i,o)\in\mathcal{V}$''. One can have in mind the following examples:
\begin{enumerate}
	\item Polynomial Root Finding: $\mathcal{I}$ is the set of univariate real polynomials of degree $d$, $\mathcal{O}=\K{R}$ and $\mathcal{V}=\{(f,\zeta):f(\zeta)=0\}$.
	\item Polynomial System Solving, which we can see as the homogeneous multivariate version of Polynomial Root Finding: $\mathcal{I}$ is the projective space of (dense or structured) systems of $n$ real homogeneous polynomials of degrees $d_1,\ldots,d_n$ in variables $x_0,\ldots,x_n$, $\mathcal{O}=\K{R}\txt{P}^n$ and $\mathcal{V}=\{(f,\zeta):f(\zeta)=0\}$.
	\item EigenValue Problem: $\mathcal{I}=\K{R}^{n\times n}$, $\mathcal{O}=\K{R}$ and  $\mathcal{V}=\{(A,\lambda):\txt{det}(A-\lambda\,\txt{Id})= 0\}$.
	\item (Homogeneous) Polynomial EigenValue Problem (in the sequel called PEVP): $\mathcal{I}$~is~the set of tuples of $d+1$ real $n\times n$ matrices $A=(A_0,\ldots,A_d)$, $\mathcal{O}=\K{R}\txt{P}^1$ and $\mathcal{V}=\{(A,[\alpha:\beta]):P(A,\alpha,\beta)=\det(\alpha^0\beta^dA_0+\alpha^1\beta^{d-1} A_1+\cdots+\alpha^d\beta^0A_d)=0\}$. One~can force some of the matrices to be symmetric, a particularly important case in applications, or consider other structured problems, see \cite{MehrmannVoss,DedieuTisseur,Tisseur,TisseurMeerbergen}. In cases $d=1$ and $d=2$ polynomial eigenvalues are often referred to as \emph{generalized eigenvalues} and \emph{quadratic eigenvalues} respectively. 
\end{enumerate}

In this paper we prove a general theorem computing exactly the expected value of the condition number in a wide collection of problems, including problem 4 above.

We start by recalling the general geometric definition of the condition number, which is usually thought of as ``a measure of the sensibility of the solution $o$ under an infinitesimal perturbation of the input $i$''. A \emph{Finsler structure} on a differentiable manifold $M$ is a smooth field of norms $\Vert \cdot \Vert_p: T_pM \rightarrow \K{R},\,p\in M$ on $M$ (see \cite[p. 223]{BCSS} for more details). In particular, a Riemannian structure $\langle \cdot,\cdot \rangle$ on $M$ defines a Finsler structure on it by  $\|\dot p\|_p=\sqrt{\langle \dot p,\dot p\rangle_p}$, $p\in M$, $\dot p\in T_pM$.
\begin{defi}[Condition number in the algebraic setting]\label{def:cn}
Let $\mathcal{I},\mathcal{O}$ and $\mathcal{V}$ be real algebraic varieties such that the smooth loci of $\mathcal{I},\mathcal{O}$ are endowed with Finsler structures and let $(i,o)\in \C{V}$ be a smooth point of $\C{V}$ such that $i\in \C{I}, o\in \C{O}$ are smooth points of $\C{I}$ and $\C{O}$ respectively. Moreover, assume that $D_{(i,o)}p_1: T_{(i,o)}\C{V} \rightarrow T_{i}\C{I}$ is invertible. Then the \emph{condition number} $\mu(i,o)$ of $(i,o)\in \C{V}$ is defined as
\[
\mu(i,o)=\left\|D_{(i,o)}p_2\circ D_{(i,o)}p_1^{-1}\right\|_{\txt{op}},
\]
where $p_1:\mathcal{V}\to\mathcal{I},\, p_2:\mathcal{V}\to\mathcal{O}$ are the projections and $\|\cdot\|_{\txt{op}}$ is the operator norm. For points $(i,o)\in \C{V}$ not satisfying the above assumptions the condition number is set to $\infty$.
\end{defi}
See \cite[Sec. 14.1]{BuCu} for more on this geometric approach to the condition number.
\begin{remark}
Definition \ref{def:cn} is intrinsic in $\C{I}$, i.e., changing $\C{I}$ to some subvariety $\C{I}^{\prime}\subset \C{I}$ leads (in general) to different, smaller, value of the condition number, since perturbations of the input are only allowed in the direction of the tangent space to the input set. Note also that the condition number depends on choices of Finsler structures on $\C{I}$ and $\C{O}$.
\end{remark}

\noindent
\textit{Example:}
The classical Turing's condition number $\mu(A) = \Vert A\Vert_{\txt{op}} \Vert A^{-1}\Vert_{\txt{op}}$ for matrix inversion corresponds to the following setting:
\begin{itemize}
\item $\C{O} =\C{I}=M(n,\K{R})$ is the set of $n\times n$ real  matrices endowed with the Finsler structure associated to relative errors in operator norm: $\|\dot A\|_A=\|\dot A\|_{\txt{op}}/\|A\|_{\txt{op}}$.
\item $\C{V}=\{(A,B):AB=\txt{Id}\} = \{(A,B): B=A^{-1}\}$. 
\end{itemize}

In the PEVP the input space $\C{I}$ is endowed with the following Riemannian structure: $\langle \dot A,\dot B\rangle_A = ((\dot A_0,\dot B_0) + \dots + (\dot A_d,\dot B_d))/((A_0,A_0)+\dots+(A_d,A_d))$, where $(\cdot,\cdot)$ is the Frobenius inner product, $A= (A_0,\dots,A_d)$ and $\dot A=(\dot A_0,\dots,\dot A_d), \dot B= (\dot B_0,\dots,\dot B_d)\in T_{A}\C{I}$. The output space $\C{O}=\RP$ possesses the standard metric and the solution variety $\C{V}= \{(A,[\alpha:\beta]) : P(A,\alpha,\beta)=0\}$ is endowed with the induced product Riemannian structure.
An explicit formula for the condition number for the Homogeneous PEVP was derived in \cite[Th. 4.2]{DeTi} (we write here the relative condition number version):
\begin{equation}\label{eq:evc}
\mu(A,(\alpha,\beta))=\left(\sum_{k=0}^d \alpha^{2k} \beta^{2d-2k}\right)^{1/2}\frac{\Vert r\Vert \Vert \ell\Vert}{|\ell^t v|}\,\|A\|,
\end{equation}
where $A=(A_0,\ldots,A_d)$, $(\alpha,\beta)\in \K{R}^2$ is a polynomial eigenvalue of $A$, $r$ and $\ell$ are the {corresponding} right and left eigenvectors and
\begin{equation}\label{eq:v}
v=\beta \frac{\partial}{\partial \alpha} P( A,\alpha,\beta)r-\alpha \frac{\partial}{\partial \beta}P( A,\alpha,\beta)r.
\end{equation}
A given tuple $A$ can have up to $nd$ real isolated polynomial eigenvalues. We define the condition number of $A$ simply as the sum of the condition numbers over all these PEVs:
\[
\mu(A) =\sum_{[\alpha:\beta]\in \RP \txt{is a PEV of } A} \mu(A,(\alpha,\beta)).
\]
(If $A=(A_0,\dots,A_d)$ has infinitely many polynomial eigenvalues, then we set $\mu(A)=\infty$). The most important result in this paper is a very general theorem which is designed to provide exact formulas for the expected value of the condition number in the PEVP and other problems. A simple particular case of our general theorem is as follows.
\begin{thm}[Gaussian Homogeneous PEVP are well conditioned on the average]\label{gaussian}
	If $A_0,\dots,A_d\in \C{N}_{M(n,\K{R})}$ are independent $\C{N}_{M(n,\K{R})}$-distributed matrices, then
	\begin{align}\label{eq:gaussian}
	\underset{A_0,\dots,A_d\sim i.i.d.\, \mathcal{N}_{M(n,\K{R})}}{\mean}\, \mu(A) &= \pi \frac{\Gamma\left(\frac{(d+1)n^2}{2}\right)}{\Gamma\left(\frac{(d+1)n^2-1}{2}\right)} \frac{\G{n+1}{2}}{\G{n}{2}} \\
	&= \frac{\pi}{2} \sqrt{(d+1)n^3}\left(1 + \C{O}\left(\frac{1}{n}\right)\right),\ n\rightarrow +\infty
	\end{align}
\end{thm}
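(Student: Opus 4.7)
The plan is to apply a coarea-type double-fibration argument to the solution variety and then exploit the full rotational invariance of the Gaussian ensemble.

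First I would introduce the enriched incidence variety
\[
\tilde{\C{V}} = \{(A, [\alpha:\beta], [r], [\ell]) : P(A,\alpha,\beta)\, r = 0,\ \ell^t P(A,\alpha,\beta) = 0\},
\]
where $P(A,\alpha,\beta)=\sum_{k=0}^d \alpha^k\beta^{d-k}A_k$. The point of this enlargement is that for any fixed quadruple $([\alpha:\beta],[r],[\ell])$ the fiber in $\C{I}$ is a linear subspace of $M(n,\K{R})^{d+1}$, so that conditional Gaussian integrals along fibers reduce to Gaussian integrals on affine linear subspaces and are therefore explicit. Writing the expectation as $\int_{\C{I}} \sum_o \mu(A,o)\,d\gamma(A)$ with $\gamma$ the standard Gaussian measure, I would apply the coarea formula to the projection $\tilde p_1:\tilde{\C{V}}\to\C{I}$ and then exchange the order of integration using the other projection $\tilde p_2:\tilde{\C{V}}\to\RP\times\K{R}\txt{P}^{n-1}\times\K{R}\txt{P}^{n-1}$.

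Next I would exploit symmetry. The Gaussian density on $M(n,\K{R})^{d+1}$ is invariant under the $\txt{O}(2)$-action rotating $(\alpha,\beta)$ (acting on the tuple $A$ via the Veronese embedding) and under the simultaneous $\txt{O}(n)$-conjugation of all $A_k$. Hence the pushed-forward integrand on the base depends only on the single rotation-invariant scalar $|\ell^t r|/(\|\ell\|\|r\|)$. After this reduction the expectation splits into three factors: (a) the length $\pi$ of $\RP$, which accounts for the overall $\pi$; (b) a one-dimensional integral over the inner product of two random unit vectors in $\K{R}^n$, which produces $\G{n+1}{2}/\G{n}{2}$; and (c) a Gaussian radial moment of the Frobenius norm of $(d+1)n^2$ i.i.d.\ standard Gaussians, which, after absorbing the coarea normalization constants, yields $\G{(d+1)n^2}{2}/\G{(d+1)n^2-1}{2}$. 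The asymptotic $\tfrac{\pi}{2}\sqrt{(d+1)n^3}$ then follows from Stirling applied to the two Gamma ratios.

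The delicate step is the first one: one must compute the normal Jacobian of $\tilde p_1$ at a generic smooth point and verify that it combines with the factor $\|r\|\|\ell\|/|\ell^t v|$ in \eqref{eq:evc} so that the resulting density on $\tilde{\C{V}}$ depends only on the rotation-invariant quantities described above. Following the pattern familiar from the Edelman--Kostlan and Shub--Smale computations for ordinary eigenvalues and for polynomial systems, one expects the denominator $|\ell^t v|$ to cancel exactly against a matching factor of the normal Jacobian, leaving a decoupled product of a Gaussian moment of $\|A\|$ and a pure spherical integral. Establishing this cancellation, and confirming that the codimension and dimension counts of the fibers of $\tilde p_1$ and $\tilde p_2$ match so that no spurious factors appear, is the main technical obstacle; once this is done the remaining integrals are classical and the closed form in \eqref{eq:gaussian} drops out.
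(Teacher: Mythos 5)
Your plan is a Shub--Smale/Edelman--Kostlan-style argument: enrich the incidence variety with left and right eigenvectors so that the $\tilde p_2$-fibers are linear, push the Gaussian through the double fibration, reduce by $\mathrm{O}(2)\times\mathrm{O}(n)$-invariance, and hope the normal Jacobian of $\tilde p_1$ cancels the factor $\|r\|\|\ell\|/|\ell^t v|$ appearing in the explicit condition number formula \eqref{eq:evc}. This is genuinely different from the paper's route, which never introduces eigenvectors and never uses \eqref{eq:evc}. The paper works directly with the variety $\C{S}\subset\K{R}^m\times S^1$ (no enrichment) and applies the double-fibration coarea trick there (Lemma \ref{coarea}); the cancellation you describe as ``the main technical obstacle'' is handled once and for all by the pointwise identity $\mu(a,x)=\Vert a\Vert\,NJ_{(a,x)}p_2/NJ_{(a,x)}p_1$ proved in \eqref{identity}, which is a three-line tangent-space computation valid for any non-degenerate semialgebraic $\C{S}$. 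In your version this step has not actually been carried out, only asserted by analogy to the literature; that is the gap. Without the explicit normal Jacobian of $\tilde p_1$ at a generic point of $\tilde{\C{V}}$ and a verification that it exactly produces $|\ell^t v|$, the argument is a plausible outline, not a proof.

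Two secondary inaccuracies worth noting. First, your factor (b): in the paper the Gamma ratio $\Gamma(\tfrac{n+1}{2})/\Gamma(\tfrac n2)$ does not arise from an integral you compute; it is imported as the known value of $|\Sigma_{M(n,\K{R})}\cap S^{n^2-1}|/|S^{n^2-2}|$ from \cite{EKS94}, after the orthogonal change of variables that maps every fiber $p_2^{-1}(x)$ isometrically onto $\Sigma_V\times V^d$. If you want to rederive it via an eigenvector integral you are in effect reproving the Edelman--Kostlan volume formula, which should be made explicit. Second, your factor (c) is described as ``a Gaussian radial moment of $(d+1)n^2$ i.i.d.\ standard Gaussians''; the correct statement is that it is the radial moment on the codimension-one scale-invariant hypersurface $p_2^{-1}(x)$, which is why the ratio is $\Gamma(\tfrac{(d+1)n^2}{2})/\Gamma(\tfrac{(d+1)n^2-1}{2})$ rather than the shifted ratio one would get from the full ambient Gaussian. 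The paper gets this from Lemma \ref{A:lemma1} applied with $p=(d+1)k-1$ and $q=1$. Fixing the Jacobian computation and these two bookkeeping points would likely make your route work, but as written it does not yet establish \eqref{eq:gaussian}.
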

In Corollary \ref{eq:GOE} we provide an analogous formula in the case when $A_0,\ldots,A_d$ are independent GOE$(n)$-distributed matrices.
\begin{remark}
	Recently in \cite{ArmBel} Armentano and the first author of the current article investigated the expectation of the squared condition number for polynomial eigenvalues of complex Gaussian matrices. Theorem \ref{gaussian} establishes the ``asymptotic square root law'' for the considered problem, i.e., when $n\rightarrow +\infty$ (and up to the factor $\pi/2$) our answer in \eqref{eq:gaussian} equals the square root of the answer in \cite{ArmBel}.
\end{remark}
In Section \ref{sec:main} we state our main results, of which Theorem \ref{gaussian} is an easy consequence. Their proofs are given in Section \ref{main} and in Section \ref{applications}, some technical results are left for the Appendix.

\section{Main results}\label{sec:main}

In this section we state our most general result, from which Theorem \ref{gaussian} will follow. First, let us fix a general framework which analyzes the input-output problems described above in a semialgebraic context. For the rest of this paper the input and the output sets will be, respectively, the real vector space $\C{I}=\K{R}^m$ and the unit circle $S^1\subset \K{R}^2$ endowed with the standard Riemannian structures. The solution variety will be a semialgebraic set $\C{S}\subset \K{R}^m\times S^1\subset \K{R}^m\times \K{R}^2$ (we change letter from $\C{V}$ to $\C{S}$ to remark the fact that it is semialgebraic). We denote by $\C{S}_{\txt{top}}$ the union of top-dimensional (smooth) strata of $\C{S}$ (see Section \ref{preliminaries} for details). Then the smooth manifold $\C{S}_{\txt{top}}\subset \K{R}^m\times S^1$ is endowed with the induced Riemannian structure. The two projections defined on $\C{S}$ are denoted by $p_1: \C{S} \rightarrow \K{R}^m,\,p_2: \C{S} \rightarrow S^1$.

\begin{defi}[Condition number in the semialgebraic setting]\label{condition}
 Near a regular point $(a,x)\in \C{S}_{\txt{top}}$ the first projection $p_1: \C{S}_{\txt{top}} \rightarrow \K{R}^m$ is locally invertible, i.e., there exists a neighbourhood $U\subset \K{R}^m$ of $a\in U$ and a unique smooth map $p_1^{-1}: U \rightarrow \C{S}_{\txt{top}}$ such that $p_1^{-1}(a) = (a,x)$ and $p_1\circ p_1^{-1} = \txt{id}_{U}$. In this case the \emph{local relative condition number} $\mu(a,x)$ is defined as
\begin{align}
  \mu(a,x):= \Vert a\Vert \sup\limits_{\dot a \in \K{R}^m\setminus \{0\}} \frac{\Vert D_a(p_2\circ p^{-1}_1)(\dot a)\Vert}{\Vert\dot a\Vert}
\end{align}
For points $(a,x)\in \C{S}_{\txt{low}} = \C{S}\setminus \C{S}_{\txt{top}}$ in the strata of lower dimension of $\C{S}$ as well as for critical points $(a,x)\in \C{S}_{\txt{top}}$ of $p_1:\C{S}_{\txt{top}} \rightarrow \K{R}^m$ we set $\mu(a,x):= \infty$. 

\emph{The relative condition number} $\mu(a)$ of $a\in \K{R}^m$ is defined to be the sum of all local relative condition numbers $\mu(a,x)$:
\begin{align}
\mu(a) := \sum\limits_{x\in S^1:\, (a,x)\in \C{S}} \mu(a,x)
\end{align}
\end{defi}
\begin{remark}
Note that Definition \ref{condition} agrees with Definition \ref{def:cn} if we endow the input space $\C{I}=\K{R}^m$ with the Riemannian structure associated to relative errors, that is $\langle \dot a,\dot b\rangle_a=(\dot b^{\,t}\dot a)/\|a\|^2,\, a\in \K{R}^m$.
\end{remark}
To simplify terminology, throughout the rest of the paper, we omit the word ``relative'' when refering to (local) relative condition number.

We deal with a large class of semialgebraic subsets of $\K{R}^m\times S^1$ that we define next. 
\begin{defi}\label{def:S} We say that the semialgebraic set $\C{S}\subset \K{R}^m\times S^1$ is \emph{non-degenerate} if the following conditions are satisfied:
\begin{enumerate}\label{conditions}
\item[1.] 
for any $x\in S^1$ the fiber $p_2^{-1}(x)$ is of dimension $m-1$,
\item[2.]
the semialgebraic set $\Sigma_1\subset \C{S}_{\txt{top}}$ of critical points of $p_1: \C{S}_{\txt{top}} \rightarrow \K{R}^m$ is at most $(m-1)$-dimensional.
In Proposition \ref{equivalence} we show that this condition is equivalent to the following one: 
\item[$2^{\prime}\hspace{-2.5pt}$.] there exists a semialgebraic subset $B\subset\K{R}^m$ of dimension at most $m-2$ such that for any $a\notin B$ the fiber $p_1^{-1}(a)$ is finite.
\end{enumerate}
\end{defi}
The first condition in Definition \ref{def:S} implies that $\C{S}$ is $m$-dimensional (see Lemma \ref{lemma1}).
To perform our probabilistic study we take the input variables $a=(a_1,\dots,a_m)\in \K{R}^m$ to be independent standard gaussians: $a\sim N(0,1)$. In the following theorem we establish a general formula for the expectation of the condition number $\mu(a)$ of a randomly chosen $a\in \K{R}^m$:
\begin{thm}\label{general}
If $\C{S}\subset \K{R}^m\times S^1$ is a non-degenerate semialgebraic set, then
\begin{align}\label{general1}
  \mean_{a\sim N(0,1)} \left( \sum\limits_{x\in S^1 :\, (a,x)\in \C{S}} \mu(a,x) \right) = \frac{1}{\sqrt{2\pi}^m} \int\limits_{x\in S^1} \int\limits_{a\in p_2^{-1}(x)}\Vert a\Vert e^{-\frac{\Vert a\Vert^2}{2}} da\,dx.
\end{align}
If, moreover, $\C{S}$ is scale-invariant with respect to the first $m$ variables, i.e., $(a,x)\in \C{S}$ if and only if $(ta,x)\in \C{S}$ for any $t>0$, then
\begin{align}\label{general2}
  \mean_{a\sim N(0,1)} \left( \sum\limits_{x\in S^1 :\, (a,x)\in \C{S}} \mu(a,x) \right) = \frac{\Gamma\left(\frac{m}{2}\right)}{2\sqrt{\pi}^m} \int\limits_{x\in S^1} |p_2^{-1}(x) \cap S^{m-1}|\, dx,
\end{align}
 where $|p_2^{-1}(x)\cap S^{m-1}|$ denotes the volume of the $(m-2)$-dimensional semialgebraic spherical set $p_2^{-1}(x)\cap S^{m-1}$.
\end{thm}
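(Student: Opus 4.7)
The plan is to rewrite both sides of \eqref{general1} as integrals over $\C{S}_{\txt{top}}$ using the area and coarea formulas, reducing the theorem to a single pointwise identity. By assumption $2^{\prime}$ (equivalent to 2 via Proposition~\ref{equivalence}), for almost every $a\in\K{R}^m$ the fiber $p_1^{-1}(a)$ is finite and disjoint from the critical locus $\Sigma_1$; moreover, the projection $p_1(\C{S}_{\txt{low}})$ has Lebesgue measure zero by dimension count, so only regular points of $\C{S}_{\txt{top}}$ contribute to $\mu(a)$. Writing out the Gaussian density and applying the area formula to $p_1:\C{S}_{\txt{top}}\to\K{R}^m$ (stratum by stratum) then yields
\[
\underset{a\sim N(0,1)}{\mean}\mu(a) = \frac{1}{\sqrt{2\pi}^m}\int_{\C{S}_{\txt{top}}} \mu(a,x)\,|Jp_1|(a,x)\, e^{-\Vert a\Vert^2/2}\, d\C{S}.
\]

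The heart of the argument is the pointwise identity
\[
\mu(a,x)\,|Jp_1|(a,x) = \Vert a\Vert\, |NJp_2|(a,x),
\]
where $|NJp_2|$ is the normal Jacobian of $p_2:\C{S}_{\txt{top}}\to S^1$. I would prove it by locally parametrizing $\C{S}_{\txt{top}}$ near a regular point $(a,x)$ as the graph $b\mapsto (b,s(b))$ of the section $s = p_2\circ p_1^{-1}$, so that the tangent space is the graph of $L:=D_as:\K{R}^m\to T_xS^1\cong \K{R}$. Representing the linear functional $L$ by the vector $v\in\K{R}^m$ with $L\dot a = v^{\,t}\dot a$, an elementary Gram-matrix computation gives $|Jp_1| = 1/\sqrt{1+\Vert v\Vert^2}$ and $|NJp_2| = \Vert v\Vert/\sqrt{1+\Vert v\Vert^2}$, while by definition $\mu(a,x)=\Vert a\Vert\,\Vert L\Vert_{\txt{op}} = \Vert a\Vert\,\Vert v\Vert$; multiplying gives the claim.

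Substituting this identity and then applying the coarea formula to $p_2:\C{S}_{\txt{top}}\to S^1$ converts the integral into
\[
\frac{1}{\sqrt{2\pi}^m}\int_{x\in S^1}\int_{a\in p_2^{-1}(x)}\Vert a\Vert\, e^{-\Vert a\Vert^2/2}\,da\,dx,
\]
which is \eqref{general1} after the natural identification of $p_2^{-1}(x)\subset \K{R}^m\times\{x\}$ with a subset of $\K{R}^m$ carrying the induced Euclidean measure. For the scale-invariant case, each fiber $p_2^{-1}(x)$ is an $(m-1)$-dimensional cone over its spherical base $C_x:=p_2^{-1}(x)\cap S^{m-1}$, so parametrizing by $(t,\theta)\in(0,\infty)\times C_x$ with induced volume element $t^{m-2}\,dt\,d\omega_{C_x}$ reduces the inner integral to $|C_x|\int_0^\infty t^{m-1}e^{-t^2/2}\,dt = |C_x|\cdot 2^{(m-2)/2}\,\G{m}{2}$. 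Combining with the Gaussian prefactor gives exactly the constant $\G{m}{2}/(2\sqrt{\pi}^m)$ that appears in \eqref{general2}.

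The main obstacle I anticipate is the careful justification of the area/coarea step on $\C{S}_{\txt{top}}$, which is a finite disjoint union of smooth strata rather than a single smooth manifold, together with the measure-zero dismissal of $\C{S}_{\txt{low}}$ and $\Sigma_1$; the former is handled by running the argument on each top stratum separately and summing, while the latter relies precisely on the dimension bounds encoded in conditions 1, 2, and $2^{\prime}$ of Definition~\ref{def:S}.
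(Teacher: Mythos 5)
Your proposal is correct and follows essentially the same route as the paper: pass from the Gaussian expectation to an integral over $\C{S}_{\txt{top}}$ via the area/coarea formula for $p_1$, prove the pointwise identity $\mu(a,x)\,NJ_{(a,x)}p_1 = \Vert a\Vert\,NJ_{(a,x)}p_2$, apply the coarea formula for $p_2$, and then slice the scale-invariant fibers by dilation. Your graph parametrization with the representing vector $v$ is a concrete rephrasing of the paper's orthonormal-basis argument (with $\|\dot a_0\|=1/\sqrt{1+\|v\|^2}$ and $\|\dot x_0\|=\|v\|/\sqrt{1+\|v\|^2}$), and your dismissal of $\C{S}_{\txt{low}}$ and $\Sigma_1$ by dimension count is exactly what the paper makes precise in Lemmas~\ref{lemma1}--\ref{lemma3}.
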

The following form of Theorem \ref{general} for sets in $\K{R}^m\times \RP$ better fits our purposes.
\begin{cor}\label{cor:projective}
Let $\C{S}\subset \K{R}^m\times S^1$ be a non-degenerate semialgebraic set that is scale-invariant with respect to the first $m$ variables and suppose that $\C{S}$ is invariant under the map $(a,x)\mapsto (a,-x),\, (a,x)\in \K{R}^m\times S^1$. Then $\mu(a,x)=\mu(a,-x),\,(a,x)\in \C{S}$, the fibers $p_2^{-1}(x), p_2^{-1}(-x)$ are isometric and
\begin{align}\label{general2}
  \mean_{a\sim N(0,1)} \left( \sum\limits_{[x]\in \RP :\, (a,x)\in \C{S}} \mu(a,x) \right) = \frac{\Gamma\left(\frac{m}{2}\right)}{2\sqrt{\pi}^m} \int\limits_{[x]\in \RP} |p_2^{-1}(x) \cap S^{m-1}|\, d[x],
\end{align}
\end{cor}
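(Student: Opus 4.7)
I would deduce the corollary from Theorem \ref{general} (formula \eqref{general2}) by exploiting the $\mathbb{Z}_2$-symmetry provided by the involution $\sigma:\K{R}^m\times S^1\to\K{R}^m\times S^1$, $\sigma(a,x)=(a,-x)$, which preserves $\C{S}$ by hypothesis and is an isometry for the product Riemannian structure.

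First I would verify the two auxiliary claims. For any $x\in S^1$ the fibers $p_2^{-1}(x)$ and $p_2^{-1}(-x)$ coincide as subsets of $\K{R}^m$, since by the $\sigma$-invariance of $\C{S}$ one has $(a,x)\in\C{S}$ iff $(a,-x)\in\C{S}$; in particular they are (trivially) isometric. To prove $\mu(a,x)=\mu(a,-x)$ at a regular point, let $q:U\to\C{S}_{\txt{top}}$ be the local inverse of $p_1$ near $(a,x)$, so $q(a)=(a,x)$. Then $\sigma\circ q$ is a smooth local section of $p_1$ with $(\sigma\circ q)(a)=(a,-x)$, and by uniqueness of local inverses it \emph{is} the local inverse of $p_1$ near $(a,-x)$ (in particular $(a,-x)$ is regular iff $(a,x)$ is). Writing $\alpha:S^1\to S^1$ for the antipodal isometry, one has $p_2\circ\sigma=\alpha\circ p_2$, hence
\[
D_a\bigl(p_2\circ(\sigma\circ q)\bigr)=D_x\alpha\circ D_a(p_2\circ q).
\]
Since $D_x\alpha$ is an isometry of tangent spaces, taking operator norms on both sides gives $\mu(a,-x)=\mu(a,x)$; at singular points both quantities are $\infty$ by the same argument.

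Finally, both the summand $\mu(a,x)$ and the integrand $|p_2^{-1}(x)\cap S^{m-1}|$ appearing in formula \eqref{general2} are invariant under $x\mapsto-x$, so the double covering $S^1\to\RP$ yields
\[
\sum_{x\in S^1:\,(a,x)\in\C{S}}\mu(a,x)\;=\;2\sum_{[x]\in\RP:\,(a,x)\in\C{S}}\mu(a,x),
\]
\[
\int_{S^1}|p_2^{-1}(x)\cap S^{m-1}|\,dx\;=\;2\int_{\RP}|p_2^{-1}(x)\cap S^{m-1}|\,d[x].
\]
Dividing \eqref{general2} by $2$ then delivers the claimed identity. The only nontrivial step is the equality $\mu(a,x)=\mu(a,-x)$; once this is unpacked from the definition of the local condition number and the fact that $\sigma$ is an isometry of $\K{R}^m\times S^1$, the remainder is the standard factor-of-two descent from the double cover $S^1$ to $\RP$.
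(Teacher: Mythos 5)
Your proof is correct and is exactly the argument the paper leaves implicit: the paper states only that Corollary~\ref{cor:projective} is ``just a projective version'' of the second part of Theorem~\ref{general}, and your derivation — verifying that the involution $\sigma(a,x)=(a,-x)$ carries local inverses of $p_1$ to local inverses, postcomposing $p_2$ with the antipodal isometry to get $\mu(a,x)=\mu(a,-x)$, noting that $p_2^{-1}(x)$ and $p_2^{-1}(-x)$ agree as subsets of $\K{R}^m$, and then dividing both sides of \eqref{general2} by the factor $2$ coming from the double cover $S^1\to\RP$ — is precisely what that remark compresses.
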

Note that Corollary \ref{cor:projective} is just a ``projective'' version of the second part of Theorem \ref{general}.

As pointed out in the introduction, we are specifically interested in the \emph{polynomial eigenvalue problem}. Given $d+1$ matrices $A_0,\dots,A_d \in M(n,\K{R})$ a point $[x]=[\alpha:\beta]\in \RP$ is a (real)  \emph{polynomial eigenvalue} (PEV) of $A=(A_0,\dots,A_d)$ if  $$\txt{det}(\alpha^0\beta^d A_0 + \dots+\alpha^d\beta^0 A_d)=0.$$

The space $M(n,\K{R})$ of $n\times n$ real matrices is endowed with the Frobenius inner product and the associated norm: 
$$(A,B) = \txt{tr}(A^t B),\quad \Vert A\Vert^2 = (A,A),\quad A,B\in M(n,\K{R}).$$ Then a $k$-dimensional vector subspace $V\subset M(n,\K{R})$ is endowed with the standard normal probability distribution $\mathcal{N}_V$:
\begin{align}
\txt{P}_{\mathcal{N}_V} (U) = \frac{1}{\sqrt{2\pi}^{\,k}} \int\limits_U e^{-\frac{\Vert v\Vert^2}{2}}dv,
\end{align}
where $dv$ is the Lebesgue measure on $(V,(\cdot,\cdot))$ and $U\subset V$ is a measurable subset. Let us also denote by $\Sigma_V=\{A\in V: \txt{det}\, A = 0\}\subset V$ the variety of singular matrices in $V$.  

\emph{The condition number} for polynomial eigenvalues of $A=(A_0,\dots,A_d)\in V^{d+1}$ is defined via
\[
\mu(A) :=\sum_{[x]\in \RP \txt{is a PEV of } A} \mu(A,x),
\]
where $\mu(A,x)$ is as in Definition \ref{condition} with $\K{R}^m=(V,(\cdot,\cdot))^{d+1}$ and
\begin{align}
\C{S}= \{(A,x)=((A_0,\dots,A_d),(\alpha,\beta))\in V^{d+1}\times S^1 : \txt{det}(\alpha^0\beta^d A_0 + \dots+\alpha^d\beta^0 A_d)=0\}
\end{align}
As proved in \cite{DeTi}, in the case $V=M(n,\K{R})$ this definition for $\mu(A,x)$ is equivalent to \eqref{eq:evc}. 
In the following theorem we investigate the expected condition number for polynomial eigenvalues of independent $\mathcal{N}_V$-distributed matrices $A_0,\dots,A_d\in V$.
\begin{thm}\label{pevp general}
If $\Sigma_V\subset V$ is of codimension one, then
\begin{align}\label{eq:pevp general}
      \underset{A_0,\dots,A_d\sim i.i.d.\, \mathcal{N}_V}{\mean}\, \mu(A) =  \sqrt{\pi} \frac{\Gamma\left(\frac{(d+1)k}{2}\right)}{\Gamma\left(\frac{(d+1)k-1}{2}\right)}\frac{|\Sigma_V\cap S^{k-1}|}{|S^{k-2}|}.
    \end{align}
\end{thm}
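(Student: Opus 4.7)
My plan is to apply Corollary~\ref{cor:projective} to the solution variety $\C{S}\subset V^{d+1}\times S^1$ defined in the statement, viewing the input space as $\K{R}^{m}$ with $m=(d+1)k$, and then to explicitly evaluate the resulting fibre integral.

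First I would verify the hypotheses of Corollary~\ref{cor:projective}. Scale-invariance in the $A$-variables is immediate from linearity of the matrix polynomial in $A$. The symmetry $(A,x)\mapsto(A,-x)$ follows from $(-\alpha)^j(-\beta)^{d-j}=(-1)^d\alpha^j\beta^{d-j}$, so the matrix polynomial is multiplied by $(-1)^d$ and its determinant by $(-1)^{dn}$. For non-degeneracy the key observation is that the linear evaluation map $L_x:V^{d+1}\to V$, $L_x(A_0,\ldots,A_d)=\sum_j\alpha^j\beta^{d-j}A_j$, is surjective for every $x=(\alpha,\beta)\in S^1$, hence $p_2^{-1}(x)=L_x^{-1}(\Sigma_V)$ has dimension $m-1$ since $\Sigma_V$ has codimension one in $V$ by hypothesis. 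The finite-fibre condition $2'$ holds because the determinant of a generic matrix polynomial is a nonzero homogeneous polynomial of degree $nd$ in $(\alpha,\beta)$, hence has finitely many projective roots.

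With the hypotheses in place, Corollary~\ref{cor:projective} yields
\begin{equation}
\mean\,\mu(A)=\frac{\Gamma(m/2)}{2\sqrt{\pi}^{\,m}}\int_{[x]\in\RP}|p_2^{-1}(x)\cap S^{m-1}|\,d[x],
\end{equation}
so the main task is to compute the spherical fibre volume. For fixed $x=(\alpha,\beta)$, I would extend $\Vert c\Vert^{-1}(\beta^d,\alpha\beta^{d-1},\ldots,\alpha^d)$ (with $c$ the coefficient vector of $L_x$) to an orthonormal basis of $\K{R}^{d+1}$ and let it act block-wise on $V^{d+1}$. This is an isometry of $V^{d+1}$ which transforms $L_x$ into $\Vert c\Vert$ times the projection onto the first block, so in the new orthonormal coordinates $(A_0',\ldots,A_d')$ one has $p_2^{-1}(x)=\{A_0'\in\Sigma_V\}$; in particular the fibre volume is independent of $x$, and the integral over $\RP$ simply contributes a factor $|\RP|=\pi$.

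It remains to compute the spherical volume of $\{(A_0,\ldots,A_d)\in S^{m-1}\subset V^{d+1}:A_0\in\Sigma_V\}$, which I would do via the standard join decomposition of the sphere: parametrize by $A_0=\cos\theta\,u$, $(A_1,\ldots,A_d)=\sin\theta\,v$ with $\theta\in[0,\pi/2]$, $u\in S^{k-1}$, $v\in S^{dk-1}$, so that the sphere volume element equals $\cos^{k-1}\theta\sin^{dk-1}\theta\,d\theta\,du\,dv$. The cone condition on $A_0$ restricts $u$ to $\Sigma_V\cap S^{k-1}$ and drops exactly one $\cos\theta$ factor, and the remaining $\theta$-integral
\begin{equation}
\int_0^{\pi/2}\cos^{k-2}\theta\sin^{dk-1}\theta\,d\theta=\frac{\Gamma((k-1)/2)\,\Gamma(dk/2)}{2\,\Gamma((m-1)/2)}
\end{equation}
is a standard Beta integral. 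Combining this with $|S^{dk-1}|=2\pi^{dk/2}/\Gamma(dk/2)$ and $\Gamma((k-1)/2)\,|S^{k-2}|=2\pi^{(k-1)/2}$ and substituting back into the previous display collapses to the stated formula. I anticipate that the most delicate part is the non-degeneracy verification (in particular the dimension count on $p_2^{-1}(x)$ which crucially uses the codimension-one hypothesis on $\Sigma_V$); the rest is a mechanical spherical integration.
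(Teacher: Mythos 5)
Your argument is sound and reaches the correct formula, but the computation of the fibre integral takes a genuinely different route from the paper's. The setup coincides: you apply Corollary~\ref{cor:projective}, observe that a block isometry of $V^{d+1}$ induced by an orthogonal matrix sending the coefficient vector $(\beta^d,\alpha\beta^{d-1},\dots,\alpha^d)$ to $(c,0,\dots,0)$ identifies $p_2^{-1}(x)$ with $\Sigma_V\times V^d$, and conclude the fibre volume is independent of $x$ — exactly the paper's isometry $\C{I}_x$. But to evaluate $|p_2^{-1}(x)\cap S^{m-1}|$ you use the orthogonal join decomposition of $S^{m-1}$ into $S^{k-1}$ and $S^{dk-1}$ together with a Beta integral, whereas the paper stays in the Gaussian-integral picture: it applies Lemma~\ref{A:lemma1} to dispose of the $\Vert A\Vert$ weight, factors the Gaussian over the last $d$ blocks, and applies Lemma~\ref{A:lemma1} once more to convert $\int_{\Sigma_V}e^{-\Vert\tilde A_0\Vert^2/2}\,d\tilde A_0$ into $|\Sigma_V\cap S^{k-1}|$. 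Both carry the same content and yield the same Gamma-function bookkeeping; yours is a bit more hands-on and self-contained, the paper's reuses its single utility lemma and avoids spherical coordinates entirely.

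There is one genuine gap in your non-degeneracy verification. For condition $(2')$ of Definition~\ref{def:S} you assert only that ``the determinant of a generic matrix polynomial'' is a nonzero binary form; but $(2')$ demands a semialgebraic bad set $B\subset V^{d+1}$ of dimension at most $m-2$ outside of which $p_1^{-1}(A)$ is finite, and ``generic'' by itself does not exclude a codimension-one bad locus, which would not satisfy the definition. You need the actual bound: the binary form $\det\bigl(\sum_j\alpha^j\beta^{d-j}A_j\bigr)$ is nonzero as soon as $\det A_0\neq 0$ (evaluate at $(\alpha,\beta)=(0,1)$) or $\det A_d\neq 0$ (evaluate at $(1,0)$), so the bad set lies in $\{\det A_0=\det A_d=0\}$, which has codimension two precisely because $\Sigma_V$ has codimension one in $V$. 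The paper carries out exactly this argument (phrased for an arbitrary basis of $P_{d,2}$, via the matrix $h\in GL(d+1)$); without something equivalent, your verification of Definition~\ref{def:S} is incomplete.
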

Poincar\'e formula \cite[(3-5)]{Howard} allows to derive the following universal upper bound.
\begin{cor}\label{bound}
If $\Sigma_V\subset V$ is of codimension one, then
\begin{align}\label{eq:bound}
      \underset{A_0,\dots,A_d\sim i.i.d.\, \mathcal{N}_V}{\mean}\, \mu(A) \leq  \sqrt{\pi} n \frac{\Gamma\left(\frac{(d+1)k}{2}\right)}{\Gamma\left(\frac{(d+1)k-1}{2}\right)}    \end{align}
\end{cor}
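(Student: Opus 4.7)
Given Theorem~\ref{pevp general}, the corollary reduces to the purely geometric estimate
\[
|\Sigma_V \cap S^{k-1}| \leq n \cdot |S^{k-2}|,
\]
and the plan is to establish this by combining an elementary pointwise bound on intersections with great circles with the Poincar\'e kinematic formula on $S^{k-1}$.

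First I would observe that $\Sigma_V$ is the zero locus in $V \cong \K{R}^k$ of the restriction of the determinant polynomial, so it is a real-algebraic hypersurface cut out by a polynomial of degree at most $n$ (nonzero thanks to the codimension-one hypothesis on $\Sigma_V$). Consequently $M := \Sigma_V \cap S^{k-1}$ is a semialgebraic hypersurface in $S^{k-1}$.

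Next I would establish the pointwise bound $\#(M \cap gC) \leq 2n$ for almost every $g \in O(k)$, where $C \subset S^{k-1}$ is any fixed great circle. Indeed, $gC$ is the unit circle of some $2$-plane $P \subset V$, and $\det|_P$ is a polynomial of degree $\leq n$ in two real variables; substituting polar coordinates on $P$ expresses it as a trigonometric polynomial of degree $\leq n$, which has at most $2n$ zeros on $gC$. (The set of planes $P$ on which $\det|_P$ vanishes identically is of measure zero in the Grassmannian, since $\det|_V$ is nonzero.)

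Finally I would integrate this bound against the Poincar\'e kinematic formula \cite[(3-5)]{Howard}, which for submanifolds $M',N\subset S^{k-1}$ of complementary dimensions $(k-2)$ and $1$ reads
\[
\int_{O(k)} \#(M' \cap gN) \, dg = c(k)\, |M'|\, |N|
\]
for an explicit constant $c(k)$ depending only on $k$. Applying this once to $(M,C)$ and once to the reference pair $(S^{k-2},C)$, for which the intersection count is identically $2$, and taking the quotient eliminates both $c(k)$ and $|C|$, yielding $|M|/|S^{k-2}| \leq 2n/2 = n$. Substituting into \eqref{eq:pevp general} then gives \eqref{eq:bound}.

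The only real obstacle I anticipate is a bookkeeping one: matching the normalization of the Haar measure on $O(k)$ used in \cite{Howard} to the conventions of the present paper. The ratio against the reference great hypersphere $S^{k-2}$ is designed precisely to bypass this technicality.
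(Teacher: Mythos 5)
Your proof is correct and takes essentially the same approach as the paper: reduce to the geometric inequality $|\Sigma_V\cap S^{k-1}|\leq n\,|S^{k-2}|$ via Theorem~\ref{pevp general}, interpret the ratio as an expected linear-section count via Poincar\'e's formula from \cite{Howard}, and bound that count by $\deg(\det|_V)\leq n$. The paper merely passes to $\K{R}\txt{P}^{k-1}$ and uses projective lines, so the factor $2n/2$ you track explicitly by working on the sphere with great circles is absorbed automatically.
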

In case $V=M(n,\K{R})$ of all square matrices we provide an explicit formula for the expected condition number, that is the claim of our Theorem \ref{gaussian} above.

We give an explicit answer also in the case $V=Sym(n,\K{R})$ of symmetric matrices. In this case the probability space $(Sym(n,\K{R}),\C{N}_{Sym(n,\K{R})})$ is usually referred to as \emph{Gaussian Orthogonal Ensemble} (GOE).
\begin{cor}\label{GOE}
If $A_0,\dots,A_d\in Sym(n,\K{R})$ are independent $GOE(n)$-matrices and $n$ is even, then
\begin{align}\label{eq:GOE}
      \underset{A_0,\dots,A_d\sim i.i.d.\, GOE(n)}{\mean}\, \mu(A) &=  \sqrt{2}n \frac{\Gamma\left(\frac{(d+1)n(n+1)}{4}\right)}{\Gamma\left(\frac{(d+1)n(n+1)-2}{4}\right)}\frac{\G{n+1}{2}}{\G{n+2}{2}}\\
      &= \sqrt{(d+1)n^3}\left(1+\C{O}\left(\frac{1}{\sqrt{n}}\right)\right),\ n\rightarrow +\infty
    \end{align}
If $n$ is odd the explicit formula is more complicated and is given in the proof of the corollary. However the above asymptotic formula is valid for both even and odd $n$.
\end{cor}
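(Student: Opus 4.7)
The plan is to apply Theorem \ref{pevp general} to $V = Sym(n,\K{R})$, so that $k = n(n+1)/2$ and $\Sigma_V$ is the codimension-one determinantal hypersurface; the whole task thereby reduces to an explicit evaluation of the spherical volume $|\Sigma_V \cap S^{k-1}|$.

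To compute this spherical volume, I would parametrize the generic (simple-kernel) stratum of $\Sigma_V$ by pairs $([v],B) \in \K{R}\txt{P}^{n-1} \times Sym(v^\perp)$, sending $([v],B)$ to the unique $A \in Sym(n,\K{R})$ with $Av=0$ and $A|_{v^\perp}=B$. A short tangent-space calculation, splitting variations into ``$B$-directions'' (which preserve the kernel) and ``$[v]$-directions'' (obtained by rotating $v$, for which $\dot A = -(v w^T + w v^T)$ with $w = B\dot v$, yielding $\|\dot A\|_F = \sqrt{2}\,\|B\dot v\|$) and verifying mutual Frobenius orthogonality between the two, gives the Jacobian identity
\[
d\mathcal{H}^{k-1}(A) = 2^{(n-1)/2}|\det B|\,d[v]\,dB.
\]
Since $\|A\|_F = \|B\|_F$ under this parametrization, restricting to the unit sphere and integrating out $[v]$ (which contributes $|S^{n-1}|/2$) leaves
\[
|\Sigma_V \cap S^{k-1}| = 2^{(n-3)/2}\,|S^{n-1}|\int_{\|B\|_F=1}|\det B|\,dS(B),
\]
and converting this spherical integral to a Gaussian one via polar coordinates on $Sym(n-1,\K{R})$ (the factor $\mean[\|B\|_F^{n-1}]$ producing a known $\Gamma$-ratio) reduces everything to the computation of $\mean_{GOE(n-1)}[|\det B|]$.

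The absolute-determinant expectation $\mean_{GOE(m)}[|\det B|]$ is a classical random-matrix quantity. Via the Weyl integration formula, one writes $|\det B|\prod_{i<j}|\lambda_i-\lambda_j| = \prod_{0\le i<j\le m}|\lambda_i-\lambda_j|$ with $\lambda_0:=0$, and recognizes this as a ``border'' Mehta integral computable by de Bruijn's Pfaffian identity. The Pfaffian evaluation takes structurally different shapes depending on whether $m$ is odd or even, which is precisely the source of the parity dichotomy in the corollary: $m = n-1$ odd (so $n$ even) gives the clean product of $\Gamma$-factors in \eqref{eq:GOE}, while $m$ even (so $n$ odd) produces the more involved expression referred to in the statement. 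Substituting back and simplifying using the Gamma duplication formula $\Gamma(z)\Gamma(z+\tfrac{1}{2}) = 2^{1-2z}\sqrt{\pi}\,\Gamma(2z)$ collapses the various Mehta-type factors and yields \eqref{eq:GOE}.

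For the asymptotic, Stirling's formula gives $\Gamma((d+1)n(n+1)/4)/\Gamma(((d+1)n(n+1)-2)/4) = (1+\C{O}(1/n^2))\sqrt{(d+1)n(n+1)/4}$ and $\Gamma((n+1)/2)/\Gamma((n+2)/2) = (1+\C{O}(1/n))\sqrt{2/n}$; multiplying by the $\sqrt{2}\,n$ prefactor produces $\sqrt{(d+1)n^3}(1+\C{O}(1/\sqrt{n}))$, valid uniformly in parity since both closed forms share the same Stirling-leading behavior. The main obstacle is the Pfaffian evaluation of $\mean_{GOE(m)}[|\det B|]$: the argument is technical, requires separate identities for the two parities of $m$, and the resulting $\Gamma$-product formulas must be carefully combined with the dimensional factors accrued in the earlier steps to recover the clean expression in \eqref{eq:GOE}.
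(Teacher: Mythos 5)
Your opening move---applying Theorem~\ref{pevp general} with $V=Sym(n,\K{R})$, $k=n(n+1)/2$, and reducing everything to the spherical volume $|\Sigma_V\cap S^{k-1}|$---is exactly what the paper does. The difference is what happens next: the paper simply cites \cite{LerLun16} for the closed-form expressions \eqref{eq:volume even}--\eqref{eq:volume odd} and \cite[Remark~3]{LerLun16} for the asymptotic of the volume ratio, whereas you re-derive the volume from scratch. Your geometric setup is correct and matches dimensions: the parametrization of the rank-$(n-1)$ stratum by $([v],B)\in\K{R}\txt{P}^{n-1}\times Sym(v^\perp)$, the tangent-space split into kernel-preserving $\dot B$-directions and rotation directions $\dot A=-(vw^T+wv^T)$ with $w=B\dot v$, the Frobenius orthogonality of the two blocks, the Jacobian $2^{(n-1)/2}|\det B|$, and the resulting reduction to $\mean_{GOE(n-1)}[|\det B|]$ are all sound. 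This route buys self-containedness (you effectively reconstruct the Lerario--Lundberg computation rather than citing it), at the cost that the decisive step---the de Bruijn/Pfaffian evaluation of $\mean_{GOE(m)}[|\det B|]$, with its parity split at $m=n-1$---is only gestured at and not carried out, and that is precisely where all the arithmetic producing the $\Gamma$-product in \eqref{eq:GOE} lives. One further caveat: your asymptotic argument, as written, only verifies the claimed rate for even $n$ (your Gamma-ratio estimates give an $\C{O}(1/n)$ error there); the uniform $\C{O}(1/\sqrt{n})$ bound across parities requires analyzing the messier odd-$n$ alternating-sum formula, which the paper obtains by invoking \cite[Remark~3]{LerLun16} rather than by re-expanding the closed forms. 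So: correct approach, more self-contained than the paper's citation-based proof, but with the Mehta-type Pfaffian evaluation and the odd-$n$ asymptotic left as genuine gaps.
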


\section{Preliminaries}\label{preliminaries}
Below we state few classical results in semialgebraic geometry that we will use, the proofs can be found in \cite{BCR:98,Coste}.

Given a semialgebraic set $S\subset \K{R}^N$ of dimension $k\leq N$ we fix a \emph{semialgebraic stratification} of $S$, i.e., a partition of $S$ into finitely many semialgebraic subsets (called \emph{strata}) such that each stratum is a smooth submanifold of $\K{R}^N$ and  the boundary of any stratum of dimension $i\leq N$ is a union of some strata of dimension less than $i$. We denote by $S_{\txt{top}}$ the union of all $k$-dimensional strata of $S$ and by  $S_{\txt{low}} = S\setminus S_{\txt{top}}$ the union of the strata of dimension less than $k$. The sets $S_{\txt{top}}, S_{\txt{low}}\subset \K{R}^N$ are semialgebraic and $S_{\txt{top}}$ is a smooth $k$-dimensional submanifold of $\K{R}^N$.

One of the central results about semialgebraic mappings is Hardt's theorem.
\begin{thm}[Hardt's semialgebraic triviality]\label{Hardt}
Let $S\subset \K{R}^N$ be a semialgebraic set and let $f: S\rightarrow \K{R}^M$ be a continuous semialgebraic mapping. Then there exists a finite partition of $\K{R}^M$ into semialgebraic sets $C_1,\dots,C_r\subset \K{R}^M$ such that $f$ is semialgebraically trivial over each $C_i$, i.e., there are a semialgebraic set $F_i$ and a semialgebraic homeomorphism $h_i: f^{-1}(C_i) \rightarrow C_i\times F_i$ such that the composition of $h_i$ with the projection $C_i\times F_i\rightarrow C_i$ equals $f|_{f^{-1}(C_i)}$.
\end{thm}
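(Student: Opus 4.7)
The plan is to prove Hardt's semialgebraic triviality by cylindrical algebraic decomposition (CAD) adapted to the geometry of $f$. First, I view the graph $\Gamma_f = \{(x, f(x)) : x \in S\}$ as a semialgebraic subset of $\K{R}^N \times \K{R}^M$, and apply the CAD theorem to construct a finite partition of $\K{R}^N \times \K{R}^M$ into semialgebraic cells that is compatible simultaneously with $S \times \K{R}^M$ and with $\Gamma_f$, and that is cylindrical with respect to the projection $\pi: \K{R}^N \times \K{R}^M \to \K{R}^M$ onto the last $M$ coordinates. The induced partition of $\K{R}^M$ by the $\pi$-images of these cells yields the desired $C_1, \ldots, C_r$.

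For each $C_i$, the cylindrical structure guarantees that the cells $\{D_{\alpha,i}\}$ covering $f^{-1}(C_i) \subset \K{R}^N \times C_i$ are arranged fiberwise over $C_i$: inductively on the intermediate coordinates, each is either the graph of a continuous semialgebraic section $\xi_{\alpha,i}$ of one coordinate over $C_i$ (and the preceding coordinates), or an open ``sandwich'' between two such sections. To construct the trivialization $h_i: f^{-1}(C_i) \to C_i \times F_i$ with $F_i := f^{-1}(c_0)$ for a chosen $c_0 \in C_i$, I proceed cell-by-cell: a point lying on a graph section $\xi_{\alpha,i}(c,\cdot)$ is mapped to the corresponding point on $\xi_{\alpha,i}(c_0,\cdot)$, and a point inside a sandwich cell is mapped by affine interpolation between the bounding graphs. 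All formulas are manifestly semialgebraic, and by construction the first-coordinate projection recovers $f$.

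The main obstacle is to verify that $h_i$ is globally a semialgebraic homeomorphism rather than merely a piecewise bijection: continuity across the lower-dimensional cells inside $f^{-1}(C_i)$, together with the semialgebraicity of the inverse, requires careful bookkeeping. Both follow from the defining property of cylindrical decompositions, namely that adjacent graph sections meet continuously along their common boundaries and that sandwich interpolations degenerate consistently as one approaches boundary cells, but making this fully rigorous is where the real work lies. An alternative route would be to use a Whitney stratification of $f$ together with Thom's first isotopy lemma and then upgrade the continuous trivializations to semialgebraic ones via semialgebraic triangulation; however, the CAD-based approach is more economical in the semialgebraic setting since it produces the trivializing homeomorphisms explicitly, without any compactness assumption on $S$ or external smoothing step.
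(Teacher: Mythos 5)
The paper does not prove Hardt's theorem: it is stated in the preliminaries as a classical result, with the proof deferred to the cited references. So there is no argument in the paper to compare against, and your proposal has to stand on its own.

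Your outline follows the standard CAD-based strategy (essentially the proof of the trivialization theorem in van den Dries's \emph{Tame Topology and O-minimal Structures}, and in the purely semialgebraic setting the proof in Bochnak--Coste--Roy), and the high-level plan is sound. But the gap you flag at the end is a genuine one, and the justification you offer for it is not correct as stated. It is not a ``defining property'' of a cylindrical decomposition that the closure of a band is the union of the band and its two bounding graph cells: a CAD produced by the standard projection procedure need not satisfy the frontier condition, and arranging it requires an extra refinement step. More importantly, even with the frontier condition in hand, the cell-by-cell assignment you describe does not by itself assemble into a well-defined map. The trivialization has to be built by induction on the number of fiber coordinates, with the map in the $(j{+}1)$-st coordinate constructed over, and compatibly with, the already-built map in the first $j$ coordinates; indeed, to map a point on the graph $\xi_{\alpha,i}(c,\cdot)$ to ``the corresponding point'' on $\xi_{\alpha,i}(c_0,\cdot)$ you already need the correspondence between the preceding fiber coordinates over $c$ and over $c_0$, which is exactly the inductive hypothesis. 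Without making that compatibility explicit one has a family of cell-wise bijections, not a single continuous homeomorphism with continuous semialgebraic inverse. (There is also the minor point that bands may be unbounded, so ``affine interpolation between the bounding graphs'' must be replaced by translations when one of the bounds is $\pm\infty$.) These compatibility and continuity checks are precisely where the published proofs spend the bulk of their effort, so as written the proposal is a correct roadmap rather than a proof.
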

The following corollary of Hardt's theorem is frequently used to estimate dimension of semialgebraic sets.
\begin{cor}\label{cor:Hardt} Let $f: S\rightarrow \K{R}^M$ be as above. Then the set $\{x\in \K{R}^N: \txt{dim}(f^{-1}(x)) = d\}$ is semialgebraic and has dimension not greater than $\txt{dim}(S)-d$. 
\end{cor}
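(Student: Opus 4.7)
The plan is to deduce Corollary \ref{cor:Hardt} directly from Hardt's semialgebraic triviality theorem (Theorem \ref{Hardt}) applied to the continuous semialgebraic map $f:S\to\K{R}^M$. Hardt's theorem hands us a finite partition of $\K{R}^M$ into semialgebraic pieces $C_1,\dots,C_r$ together with semialgebraic homeomorphisms $h_i: f^{-1}(C_i)\to C_i\times F_i$ compatible with the projection. The key observation is that, over each $C_i$, every fiber of $f$ is semialgebraically homeomorphic to the common ``model fiber'' $F_i$. Consequently all fibers above $C_i$ have the same dimension $d_i:=\dim F_i$ (setting $d_i=-\infty$ when $C_i\cap f(S)=\emptyset$, so that $F_i=\emptyset$).

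First I would use this to recognize that
\[
\{x\in\K{R}^M:\dim f^{-1}(x)=d\}=\bigsqcup_{i\,:\,d_i=d}C_i,
\]
which is a finite union of semialgebraic sets, hence semialgebraic. This already settles the first half of the statement.

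Next I would bound the dimension. Two standard facts about semialgebraic dimension are invoked: (i) semialgebraic homeomorphisms preserve dimension, and (ii) dimension is additive on semialgebraic products, so $\dim(C_i\times F_i)=\dim C_i+d_i$. Combined with monotonicity under inclusion $f^{-1}(C_i)\subset S$, this gives
\[
\dim C_i+d_i=\dim h_i(f^{-1}(C_i))=\dim f^{-1}(C_i)\leq \dim S.
\]
For each index $i$ with $d_i=d$ this yields $\dim C_i\leq \dim S-d$. Since the dimension of a finite union of semialgebraic sets equals the maximum of their dimensions, the dimension of $\{x:\dim f^{-1}(x)=d\}$ is at most $\dim S-d$, as claimed.

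The main obstacle, to the extent there is one, is not conceptual but bookkeeping: we must invoke the right package of classical properties of semialgebraic dimension (homeomorphism invariance, additivity on products, monotonicity, and ``dimension of a union equals the maximum of dimensions''). All four are in \cite{BCR:98,Coste} and the argument becomes a one-line consequence once Hardt's theorem is in hand; the only real subtlety is making sure we treat the empty model fibers $F_i=\emptyset$ correctly by the dimension convention, so that they do not appear in the union indexed by $d_i=d$ when $d\geq 0$.
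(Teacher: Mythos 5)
Your proof is correct and is exactly the standard derivation from Hardt's theorem; the paper itself gives no proof of Corollary~\ref{cor:Hardt} (it simply cites \cite{BCR:98,Coste}), so there is nothing to contrast against. You correctly handle the one subtlety — that for each piece $C_i$ either $C_i\subset f(S)$ or $C_i\cap f(S)=\emptyset$ (since $h_i(f^{-1}(x))=\{x\}\times F_i$ forces $F_i=\emptyset$ as soon as one fiber over $C_i$ is empty) — and the dimension bookkeeping (homeomorphism invariance, product additivity, monotonicity, union = max) is all standard semialgebraic dimension theory.
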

\section{Proof of main results}\label{main}
In this section we prove our main results, Theorems \ref{general} and \ref{pevp general}. 
Let us first fix some notations that are used in the rest of the paper: for a non-degenerate subset $\C{S}\subset \K{R}^m\times S^1$ by $\Sigma_1, \Sigma_2 \subset \C{S}_{\txt{top}}$ we denote the semialgebraic sets of critical points of $p_1: \C{S}_{\txt{top}} \rightarrow \K{R}^m$ and $p_2: \C{S}_{\txt{top}} \rightarrow S^1$ respectively, the corresponding semialgebraic sets of critical values are denoted by $\sigma_1=p_1(\Sigma_1)\subset \K{R}^m$ and $\sigma_2=p_2(\Sigma_2)\subset S^1$. 
\subsection{Proof of Theorem \ref{general}}
In this subsection $\C{S}$ denotes a non-degenerate semialgebraic subset of $\K{R}^m\times S^1$. 
For the proof of Theorem \ref{general} we need few technical lemmas which we state and prove below.
\begin{lemma}\label{lemma1}
The semialgebraic sets $\C{S}\subset \K{R}^m\times S^1$ and $p_1(\C{S})\subset \K{R}^m$ are of dimension $m$. 
\end{lemma}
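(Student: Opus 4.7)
The plan is to prove the two dimensional claims in sequence, first $\dim \C{S}=m$ and then $\dim p_1(\C{S})=m$, each time using exactly one of the defining conditions of non-degeneracy together with the semialgebraic machinery of Section \ref{preliminaries}.

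For the equality $\dim \C{S}=m$ I would apply Hardt's theorem (Theorem \ref{Hardt}) to $p_2: \C{S} \to S^1$. Partitioning $S^1$ into finitely many semialgebraic pieces $C_i$ over which $p_2$ is trivial, i.e.\ $p_2^{-1}(C_i) \cong C_i \times F_i$ semialgebraically, each fiber $F_i$ coincides with $p_2^{-1}(x)$ for any $x\in C_i$ and, by condition 1 of Definition \ref{def:S}, has dimension $m-1$. Hence $\dim p_2^{-1}(C_i) = \dim C_i + (m-1) \leq 1 + (m-1) = m$, and taking unions gives $\dim \C{S} \leq m$. The reverse inequality follows from Corollary \ref{cor:Hardt}: the set $\{x \in S^1 : \dim p_2^{-1}(x) = m-1\}$ has dimension at most $\dim \C{S} - (m-1)$, but condition 1 says it is all of $S^1$, which has dimension $1$, so $\dim \C{S} \geq m$.

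For $\dim p_1(\C{S})=m$ I would pass to $\C{S}_{\txt{top}}$, which by the first part is an $m$-dimensional smooth semialgebraic submanifold of $\K{R}^m \times S^1$. By condition 2 of Definition \ref{def:S}, the critical locus $\Sigma_1 \subset \C{S}_{\txt{top}}$ of $p_1|_{\C{S}_{\txt{top}}}$ has dimension at most $m-1 < m$, so $\C{S}_{\txt{top}}\setminus \Sigma_1$ is a nonempty open subset of $\C{S}_{\txt{top}}$ on which $D p_1$ is everywhere invertible. The inverse function theorem then makes $p_1$ a local diffeomorphism there, and its image $p_1(\C{S}_{\txt{top}}\setminus \Sigma_1)$ is an open subset of $\K{R}^m$, hence of dimension $m$. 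Since this image lies inside $p_1(\C{S}) \subset \K{R}^m$, equality $\dim p_1(\C{S})=m$ follows.

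There is no serious obstacle here; the only subtlety is to ensure that the top-dimensional stratum really remains nonempty after removing $\Sigma_1$, which is automatic from the strict inequality $\dim \Sigma_1 < \dim \C{S}_{\txt{top}}=m$. Note that I deliberately use condition 2 in its original form rather than the equivalent condition $2'$, so as to avoid invoking Proposition \ref{equivalence} and any potential circularity with that forward reference.
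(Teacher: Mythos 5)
Your argument is correct and follows the same route as the paper's proof: Hardt's trivialization of $p_2$ combined with condition~1 to pin down $\dim \C{S}=m$, and then the regular-point/open-image argument on $p_1\vert_{\C{S}_{\txt{top}}}$ using condition~2 to get $\dim p_1(\C{S})=m$. The only stylistic difference is that you split the first part into upper and lower bounds and invoke Corollary~\ref{cor:Hardt} for the lower bound, whereas the paper observes directly that one trivializing piece $C_i\subset S^1$ must be one-dimensional, giving $\dim p_2^{-1}(C_i)=1+(m-1)=m$; both are the same Hardt-based computation.
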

\begin{proof}
 Since $\C{S}$ is non-degenerate, for every $x\in S^1$ the fiber $p_2^{-1}(x)$ is $(m-1)$-dimensional. From Theorem \ref{Hardt} it follows that for some $x\in S^1$ we have $\txt{dim}(\C{S}) = \txt{dim}(p_2^{-1}(x))+\txt{dim}(S^1) = (m-1)+1 = m$. 

The map $p_1: \C{S}_{\txt{top}} \rightarrow \K{R}^m$ has a regular point $(a,x)\in \C{S}_{\txt{top}}\setminus \Sigma_1$ since $\C{S}$ is $m$-dimensional and the set $\Sigma_1$ of critical points of $p_1$ is at most $(m-1)$-dimensional. The image $p_1(U)$ of a small open neighbourhood $U\subset \C{S}_{\txt{top}}\setminus \Sigma_1$ of $(a,x)\in U$ is open in $\K{R}^m$ and hence $\txt{dim}(p_1(\C{S})) =m$.
\end{proof}

\begin{lemma}\label{lemma2}
There exists an open semialgebraic subset $M\subset \C{S}_{\txt{top}}$ such that $p_1(M)$ is open in $\K{R}^m$, $M= p_1^{-1}(p_1(M))$, the restriction $p_1: M \rightarrow p_1(M)$ is a submersion and $\txt{dim}\,(\C{S}\setminus M)\leq m-1$. 
\end{lemma}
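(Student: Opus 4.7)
The plan is to construct $M$ as the preimage under $p_1$ of an open dense semialgebraic subset of $\mathbb{R}^m$, chosen precisely so that $M$ automatically avoids the lower-dimensional strata of $\mathcal{S}$, the critical locus of $p_1|_{\mathcal{S}_{\text{top}}}$, and any inputs with infinite fiber.

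First I would identify the closed semialgebraic ``bad'' set $N\subset \mathbb{R}^m$ whose complement will serve as $p_1(M)$. Let $\Sigma_1\subset \mathcal{S}_{\text{top}}$ be the semialgebraic set of critical points of $p_1|_{\mathcal{S}_{\text{top}}}$, which by non-degeneracy satisfies $\dim\Sigma_1\leq m-1$, and hence $\dim \sigma_1=\dim p_1(\Sigma_1)\leq m-1$. Also $\dim p_1(\mathcal{S}_{\text{low}})\leq \dim \mathcal{S}_{\text{low}}\leq m-1$. By the equivalent condition $2'$ of Definition \ref{def:S}, there exists a semialgebraic $B\subset \mathbb{R}^m$ with $\dim B\leq m-2$ outside of which the fibers of $p_1$ are finite. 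I would then set $N:=\overline{\sigma_1\cup p_1(\mathcal{S}_{\text{low}})\cup B}$, a closed semialgebraic set of dimension at most $m-1$ (since semialgebraic closure preserves dimension), and define
\[
M:=p_1^{-1}(\mathbb{R}^m\setminus N)\cap \mathcal{S}.
\]

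The first three conclusions then fall out almost tautologically. $M$ is open in $\mathcal{S}$ as the preimage of an open set. Because $N\supseteq p_1(\mathcal{S}_{\text{low}})$, no point of $\mathcal{S}_{\text{low}}$ lies in $M$, so $M\subset \mathcal{S}_{\text{top}}$. Because $N\supseteq \sigma_1$, no point of $\Sigma_1$ lies in $M$, so $p_1|_M$ is a submersion between smooth manifolds of the same dimension $m$, and in particular $p_1(M)$ is open in $\mathbb{R}^m$. The saturation identity $M=p_1^{-1}(p_1(M))$ is immediate: any $(a,x)\in \mathcal{S}$ with $a\in p_1(M)\subseteq \mathbb{R}^m\setminus N$ lies in $p_1^{-1}(\mathbb{R}^m\setminus N)\cap \mathcal{S}=M$.

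The only non-routine step, and the one I expect to be the main obstacle, is the dimension estimate $\dim(\mathcal{S}\setminus M)=\dim p_1^{-1}(N)\leq m-1$. For this I would apply Hardt's theorem (Theorem \ref{Hardt}) to the restriction $p_1: p_1^{-1}(N)\to N$, obtaining a finite semialgebraic partition $N=C_1\sqcup\cdots\sqcup C_r$ over which $p_1$ is trivial with respective fibers $F_1,\dots,F_r$, so that $\dim p_1^{-1}(C_i)=\dim C_i+\dim F_i$. Since each $F_i$ embeds into $S^1$, one has $\dim F_i\leq 1$. I would argue in two cases: if $C_i\not\subseteq B$, then some $a\in C_i$ has finite fiber by the defining property of $B$, so $F_i$ is finite, $\dim F_i=0$, and $\dim p_1^{-1}(C_i)\leq \dim C_i\leq \dim N\leq m-1$; if instead $C_i\subseteq B$, then $\dim C_i\leq \dim B\leq m-2$, whence $\dim p_1^{-1}(C_i)\leq (m-2)+1=m-1$. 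Taking the maximum over $i$ delivers $\dim p_1^{-1}(N)\leq m-1$. The only subtlety worth checking carefully is that Hardt's partition can be refined to respect the inclusion $B\subset \mathbb{R}^m$ (so that each $C_i$ is either contained in $B$ or disjoint from it), which is harmless since refining a semialgebraic partition preserves triviality over the pieces.
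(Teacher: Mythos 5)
Your proof is correct and takes essentially the same approach as the paper: define $M$ as the preimage of the complement of a bad set $N$ containing $\overline{p_1(\mathcal{S}_{\text{low}}\cup\Sigma_1)}$, verify the first three properties directly, and estimate $\dim p_1^{-1}(N)$ by splitting according to whether points lie in $B$ or not. The only cosmetic differences are that you include $B$ in $N$ (which the paper does not, though this is harmless since $\dim B\leq m-2$) and that you invoke Hardt's theorem via an explicit trivialization over cells $C_i$ rather than via the shorthand ``$\dim p_1^{-1}(Y)\leq \dim p_1^{-1}(a)+\dim Y$ for some $a\in Y$,'' but these are the same argument.
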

\begin{proof}
Define $M:=p_1^{-1}(\K{R}^m\setminus N) = \C{S}_{\txt{top}}\setminus p_1^{-1}(N)$, where $N:=\overline{p_1(\C{S}_{\txt{low}} \cup \Sigma_1)}$ and the bar stands for the euclidean closure of a set. Note that $M$ is an open subset of $\C{S}_{\txt{top}}$ and $M= p_1^{-1}(p_1(M))$. Moreover $M$ consists of regular points of the projection $p_1:\C{S}_{\txt{top}}\rightarrow \K{R}^m$, which implies that $p_1(M)$ is an open subset of $\K{R}^m$ and $p_1:M\rightarrow p_1(M)$ is a submersion of smooth manifolds. Indeed, for $a\in p_1(M)$ and  $(a,x)\in M$ the image $p_1(U)$ of a small open neighborhood $U\subset M$ of $(a,x)\in U$ is open in $\K{R}^m$ and $a\in p_1(U)$.

We now prove that $\C{S}\setminus M = p_1^{-1}(N)$ is at most $(m-1)$-dimensional. Since $\C{S}$ is non-degenerate there exists a semialgebraic set $B\subset \K{R}^m$ with $\txt{dim}(B)\leq m-2$ such that $p_1^{-1}(a)$ is finite for $a\notin B$. We decompose the semialgebraic set $N = (N\cap B) \cup (N\setminus B)$. From Theorem \ref{Hardt} it follows that there exists some $a\in N\cap B$ such that  ${\txt{dim}(p_1^{-1}(N\cap B))} \leq \txt{dim}(p_1^{-1}(a)) +\txt{dim}(N\cap B)\leq 1 + (m-2) = m-1$. For $a\in N\setminus B$ the fiber $p_1^{-1}(a)$ is discrete, which together with the non-degeneracy of $\C{S}$ and Theoren \ref{Hardt} implies $\txt{dim}(p_1^{-1}(N\setminus B)) \leq \txt{dim}(p_1^{-1}(a))+\txt{dim}(N\setminus B) \leq  \txt{dim}(\C{S}_{\txt{low}} \cup \Sigma_1) \leq m-1$. Therefore, $\txt{dim}(\C{S}\setminus M) = \txt{dim}(p_1^{-1}(N)) = \txt{dim}(p_1^{-1}(N\cap B)\cup p_1^{-1}(N\setminus B))\leq m-1$.
\end{proof}
\begin{lemma}\label{lemma3}
There exists an open semialgebraic subset $R\subset \C{S}_{\txt{top}}$ such that $S^1\setminus p_2(R)$ is finite, $p_2: R\rightarrow p_2(R)$ is a submersion, $\txt{dim}(\C{S}\setminus R)\leq m-1$ and $\txt{dim} (p_2^{-1}(x)\setminus R) \leq m-2$ for $x\in p_2(R)$.
\end{lemma}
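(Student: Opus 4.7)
The plan is to construct $R$ by removing from $\C{S}_{\txt{top}}$ the $p_2$-preimages of a finite ``bad'' subset $B\subset S^1$, chosen as the union of two pieces: the critical values of $p_2|_{\C{S}_{\txt{top}}}$, and the (finitely many) values $x$ for which $p_2^{-1}(x)\cap \C{S}_{\txt{low}}$ has the maximal possible dimension $m-1$. Once both pieces are shown to be finite, all four conclusions of the lemma follow essentially formally.

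First, since $\C{S}_{\txt{top}}$ is a smooth $m$-dimensional manifold and $S^1$ has dimension $1$, the semialgebraic Sard theorem implies that $\sigma_2 = p_2(\Sigma_2)\subset S^1$ is a semialgebraic set of measure zero, hence of dimension strictly less than $1$, hence finite. Second, by Corollary \ref{cor:Hardt} applied to $p_2|_{\C{S}_{\txt{low}}}: \C{S}_{\txt{low}} \to S^1\subset \K{R}^2$, and using $\dim \C{S}_{\txt{low}} \leq m-1$, the set
\[
D := \bigl\{x\in S^1 : \dim\bigl(p_2^{-1}(x)\cap \C{S}_{\txt{low}}\bigr) = m-1\bigr\}
\]
satisfies $\dim D \leq \dim\C{S}_{\txt{low}} - (m-1) \leq 0$, so $D$ is finite as well.

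Now I would define $R := \C{S}_{\txt{top}}\cap p_2^{-1}\bigl(S^1\setminus (\sigma_2\cup D)\bigr)$. Since $\sigma_2\cup D$ is a finite subset of $S^1$, its complement is open, and so $R$ is an open semialgebraic subset of $\C{S}_{\txt{top}}$; by construction every point of $R$ is a regular point of $p_2|_{\C{S}_{\txt{top}}}$, so $p_2: R\to p_2(R)$ is a submersion. Each fiber $p_2^{-1}(x)\cap \C{S}_{\txt{top}}$ is $(m-1)$-dimensional (hence nonempty) by the first non-degeneracy condition together with Lemma \ref{lemma1}, giving $p_2(R) = S^1\setminus(\sigma_2\cup D)$ and thus $S^1\setminus p_2(R)$ finite. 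For the two dimension bounds, I would decompose $\C{S}\setminus R = \C{S}_{\txt{low}} \cup \bigl(\C{S}_{\txt{top}}\cap p_2^{-1}(\sigma_2\cup D)\bigr)$: the first summand is at most $(m-1)$-dimensional by definition of stratification, and the second is a finite union of fibers of dimension $m-1$, so $\dim(\C{S}\setminus R)\leq m-1$. Finally, for $x\in p_2(R)$ one has $p_2^{-1}(x)\setminus R = p_2^{-1}(x)\cap \C{S}_{\txt{low}}$ (because $x\notin\sigma_2\cup D$ forces the other contribution to vanish), whose dimension is at most $m-2$ precisely because $x\notin D$.

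The main subtle point, compared with Lemma \ref{lemma2}, is the need to exclude the extra set $D$ on top of the critical values $\sigma_2$: without this, the left-over intersection $p_2^{-1}(x)\cap \C{S}_{\txt{low}}$ could a priori be of dimension $m-1$ rather than the required $m-2$, and the ``$\leq m-2$'' conclusion of the lemma would fail. Corollary \ref{cor:Hardt} is the natural semialgebraic tool that makes $D$ an exceptional finite set, and it is the only nontrivial ingredient beyond Sard's theorem.
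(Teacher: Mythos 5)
Your construction of $R$ is the same as the paper's: remove from $\C{S}_{\txt{top}}$ the $p_2$-preimages of $\sigma_2$ (finite by Sard) and of the exceptional set $D$ where $p_2^{-1}(x)\cap\C{S}_{\txt{low}}$ is $(m-1)$-dimensional (finite by Corollary \ref{cor:Hardt}); this is precisely the paper's $R=\C{S}_{\txt{top}}\setminus p_2^{-1}(\sigma_2\cup C)$. The one place you are imprecise is the justification that $p_2(R)=S^1\setminus(\sigma_2\cup D)$. You assert that ``each fiber $p_2^{-1}(x)\cap\C{S}_{\txt{top}}$ is $(m-1)$-dimensional, hence nonempty, by the first non-degeneracy condition together with Lemma \ref{lemma1}'' --- but neither of those gives this: condition 1 bounds the dimension of $p_2^{-1}(x)$ in $\C{S}$, not of its part in $\C{S}_{\txt{top}}$, and there can be finitely many $x$ with $p_2^{-1}(x)\subset\C{S}_{\txt{low}}$ (the paper explicitly carries $S^1\setminus p_2(\C{S}_{\txt{top}})$ as a separate finite exceptional set). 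The correct reason is that you have already removed $D$: for $x\notin D$ the lower strata contribute at most dimension $m-2$ to the $(m-1)$-dimensional fiber $p_2^{-1}(x)$, so $p_2^{-1}(x)\cap\C{S}_{\txt{top}}$ must be nonempty. With that rephrasing your argument is complete, and it even absorbs $S^1\setminus p_2(\C{S}_{\txt{top}})\subseteq D$ into a single exceptional set rather than tracking it separately as the paper does.
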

\begin{proof}
Since $\C{S}$ is non-degenerate every fiber $p_2^{-1}(x), x\in S^1$ is $(m-1)$-dimensional. 

Note that the set $S^1\setminus p_2(\C{S}_{\txt{top}})$ is semialgebraic and zero-dimensional, thus finite. Indeed, if it was one-dimensional Theorem \ref{Hardt} together with $\txt{dim}(p_2^{-1}(x))=m-1, x\in S^1$ would imply that $p_2^{-1}(S^1\setminus p_2(\C{S}_{\txt{top}}))\subset \C{S}\setminus \C{S}_{\txt{top}}$ is $m$-dimensional which would contradict to $\txt{dim}(\C{S}\setminus \C{S}_{\txt{top}})\leq m-1$. 

The semialgebraic set $\sigma_2=p_2(\Sigma_2)\subset S^1$ of critical values of $p_2: \C{S}_{\txt{top}} \rightarrow S^1$ has measure zero by Sard's theorem. Hence $\sigma_2\subset S^1$ consists of a finite number of points. 

Applying Corollary \ref{cor:Hardt} to the map $p_2: \C{S}_{\txt{low}} \rightarrow S^1$ we have that $C:=\{x\in S^1: \txt{dim}(p_2^{-1}(x)\cap \C{S}_{\txt{low}}) = m-1\}$ is a semialgebraic subset of $S^1$ and $\txt{dim}(C)\leq \txt{dim}(\C{S}_{\txt{low}})-(m-1)\leq 0$. Thus $C$ is a (possibly empty) finite set.

Set now $R:=\C{S}_{\txt{top}}\setminus p_2^{-1}(\sigma_2\cup C)$. Note that $R$ is an open semialgebraic subset of $\C{S}_{\txt{top}}$ and $S^1\setminus p_2(R) = \sigma_2\cup C\cup(S^1\setminus p_2(\C{S}_{\txt{top}}))$ is finite by the above arguments. Since $R$ consists of regular points of $p_2:\C{S}_{\txt{top}}\rightarrow S^1$ the map $p_2: R\rightarrow p_2(R)$ is a submersion. Since $\txt{dim}(\C{S}_{\txt{low}})\leq m-1$ and $p_2^{-1}(\sigma_2\cup C)$ is a finite collection of $(m-1)$-dimensional fibers we have that $\txt{dim}(\C{S}\setminus R = \C{S}_{\txt{low}} \cup p_2^{-1}(\sigma_2\cup C))\leq m-1$. Finally, $\txt{dim}(p_2^{-1}(x)\setminus R= p_2^{-1}(x) \cap\C{S}_{\txt{low}})\leq m-2$ for $ x\in p_2(R)$ because $p_2(R)\cap C=\varnothing$.
\end{proof}

\begin{lemma}\label{coarea}
 For any measurable function $f: \C{S} \rightarrow [0,+\infty)$ we have
 \begin{align}
   \int\limits_{a\in \K{R}^m} \sum\limits_{x\in S^1 :\, (a,x)\in \C{S}} f(a,x)\, da = \int\limits_{x\in S^1} \int\limits_{a\in p_2^{-1}(x)} \frac{NJ_{(a,x)} p_1}{NJ_{(a,x)} p_2} f(a,x)\, da\,dx
 \end{align}
\end{lemma}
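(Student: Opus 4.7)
The plan is to apply the smooth coarea formula twice, once to each of the projections $p_1$ and $p_2$, on a suitable open dense smooth subset of $\C{S}_{\txt{top}}$ where both are submersions, and then to verify that the pieces thrown away contribute nothing to either side of the claimed identity.

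First, I set $W := M\cap R \subset \C{S}_{\txt{top}}$, where $M$ and $R$ are the open semialgebraic subsets produced by Lemmas \ref{lemma2} and \ref{lemma3}. Then $W$ is an open smooth $m$-dimensional Riemannian submanifold of $\K{R}^m\times S^1$, on which $p_1$ and $p_2$ are both submersions, so that $NJ_{(a,x)} p_1 > 0$ and $NJ_{(a,x)} p_2 > 0$ at every $(a,x)\in W$; in particular the integrand on the right-hand side of the statement is well defined there.

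Next, for the equidimensional submersion $p_1:W\to\K{R}^m$ the coarea formula yields
\[
\int_W f(a,x)\, NJ_{(a,x)} p_1\, d\txt{vol}_W(a,x) = \int_{\K{R}^m}\sum_{(a,x)\in p_1^{-1}(a)\cap W} f(a,x)\, da,
\]
while for $p_2: W \to p_2(W)$, applied to the nonnegative measurable integrand $f\cdot NJ p_1/NJ p_2$,
\[
\int_W f(a,x)\, NJ_{(a,x)} p_1\, d\txt{vol}_W(a,x) = \int_{p_2(W)}\int_{p_2^{-1}(x)\cap W}\frac{NJ_{(a,x)} p_1}{NJ_{(a,x)} p_2}\, f(a,x)\, da\, dx.
\]
Equating the two right-hand sides gives the desired identity with $\C{S}$ replaced by $W$ on both sides.

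Finally, I argue that the integrals do not change when passing from $W$ back to the whole of $\C{S}$. By Lemma \ref{lemma2}, $\txt{dim}(\C{S}\setminus W)\leq m-1$, so its image $p_1(\C{S}\setminus W)\subset\K{R}^m$ is a semialgebraic set of dimension at most $m-1$ and thus Lebesgue-null, which allows one to replace $p_1^{-1}(a)\cap W$ with $p_1^{-1}(a)$ in the left-hand side. By Lemma \ref{lemma3}, $S^1\setminus p_2(W)$ is finite and $\txt{dim}(p_2^{-1}(x)\setminus W)\leq m-2$ for every $x\in p_2(W)$, so the $(m-1)$-dimensional volume of $p_2^{-1}(x)\setminus W$ vanishes for almost every $x\in S^1$, letting one replace $p_2^{-1}(x)\cap W$ with $p_2^{-1}(x)$ on the right-hand side. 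The main technical obstacle is precisely this last step: one must carefully translate the semialgebraic dimension bounds of Lemmas \ref{lemma2} and \ref{lemma3} into the Hausdorff-measure-zero statements needed to justify enlarging the domains of integration, whereas the two applications of the coarea formula on $W$ itself are routine.
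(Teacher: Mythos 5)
Your approach is essentially the paper's: apply the smooth coarea formula once to $p_1$ and once to $p_2$ on a full-measure regular locus and pivot through the common integral $\int NJ\,p_1\cdot f$. The only difference is that you take the intersection $W=M\cap R$ and run both coarea applications on $W$, whereas the paper applies coarea for $p_1$ on $M$ alone and for $p_2$ on $R$ alone, each time showing the resulting surface integral equals $\int_{\C{S}} NJ_{(a,x)}p_1\, f(a,x)\,d(a,x)$. This matters for one step in your argument: you assert that ``by Lemma \ref{lemma3}, $S^1\setminus p_2(W)$ is finite and $\dim\bigl(p_2^{-1}(x)\setminus W\bigr)\leq m-2$ for every $x\in p_2(W)$,'' but Lemma \ref{lemma3} only controls $S^1\setminus p_2(R)$ and $p_2^{-1}(x)\setminus R$. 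Since $p_2^{-1}(x)\setminus W=\bigl(p_2^{-1}(x)\setminus R\bigr)\cup\bigl(p_2^{-1}(x)\setminus M\bigr)$, you still owe an argument that $p_2^{-1}(x)\setminus M$ has dimension at most $m-2$ for almost every $x$. This is easy but not automatic: apply Corollary \ref{cor:Hardt} to $p_2$ restricted to $\C{S}\setminus M$ (which has dimension $\leq m-1$ by Lemma \ref{lemma2}) to conclude that $\{x\in S^1: \dim(p_2^{-1}(x)\setminus M)=m-1\}$ is at most $0$-dimensional, hence finite; this also shows $S^1\setminus p_2(W)$ is finite. The paper's two-set split avoids this extra step at no cost, which is why it is organized that way. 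With the one-line patch, your proof is complete and correct.
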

\begin{proof}
Let $M\subset \C{S}_{\txt{top}}$ be as in Lemma \ref{lemma2}. The smooth coarea formula \cite[(A-2)]{Howard} applied to the measurable function $f: M \rightarrow [0,+\infty)$ and to the submersion $p_1: M\rightarrow p_1(M)$ reads
\begin{align}\label{coarea1}
  \int\limits_{(a,x)\in M} NJ_{(a,x)} p_1\, f(a,x)\, d(a,x) = \int\limits_{a\in p_1(M)} \sum\limits_{x\in S^1:\, (a,x)\in \C{S}} f(a,x)\, da,
\end{align}
where we used that $M=p_1^{-1}(p_1(M))$ (Lemma \ref{lemma2}) to be able to sum over the whole fiber $p_1^{-1}(a)=\{(a,x)\in \C{S}\},\, a\in p_1(M)$.
By Lemma \ref{lemma2} we have $\txt{dim}(\C{S}\setminus M)\leq m-1$ and hence $\txt{dim}(p_1(\C{S})\setminus p_1(M)=p_1(\C{S}\setminus M))\leq \txt{dim}(\C{S}\setminus M)\leq m-1$. Thus we extend the integrations in \eqref{coarea1} over $\C{S}$ and $p_1(\C{S})$ respectively without changing the result. Moreover the integration over $p_1(\C{S})$ can be further extended to the whole space $\K{R}^m$ since for a point $a\in \K{R}^m\setminus p_1(\C{S})$ the summation $\sum_{x\in S^1: (a,x)\in \C{S}} f(a,x)$ is performed over the empty set $p_1^{-1}(a)$ in which case the sum is conventionally set to $0$. All together the above arguments imply
\begin{align}\label{coarea2}
  \int\limits_{(a,x)\in \C{S}} NJ_{(a,x)} p_1\, f(a,x)\, d(a,x) = \int\limits_{a\in \K{R}^m} \sum\limits_{x\in S^1:\, (a,x)\in \C{S}} f(a,x)\, da,
\end{align}

Let $R\subset \C{S}_{\txt{top}}$ be as in Lemma \ref{lemma3}. Applying the smooth coarea formula \cite[(A-2)]{Howard} to the measurable function $\frac{NJ p_1}{NJ p_2} f: R \rightarrow [0,+\infty)$ and to the submersion $p_2: R\rightarrow p_2(R)$ we obtain
\begin{align}\label{coarea3}
\int\limits_{(a,x)\in R} NJ_{(a,x)} p_1\, f(a,x)\, d(a,x) = \int\limits_{x\in p_2(R)} \int\limits_{a\in p_2^{-1}(x)\cap R} \frac{NJ_{(a,x)} p_1}{NJ_{(a,x)} p_2} f(a,x)\, da\, dx
\end{align}
By Lemma \ref{lemma3} $\txt{dim}(\C{S}\setminus R)\leq m-1$, $S^1\setminus p_2(R)$ is finite, and $\txt{dim}(p_2^{-1}(x)\setminus R)\leq m-2$ for $x\in p_2(R)$. Thus the integrations in \eqref{coarea3} can be extended over $\C{S}, S^1$ and $p_2^{-1}(x)$ respectively leading to
\begin{align}\label{coarea4}
\int\limits_{(a,x)\in \C{S}} NJ_{(a,x)} p_1\, f(a,x)\, d(a,x) = \int\limits_{x\in S^1} \int\limits_{a\in p_2^{-1}(x)} \frac{NJ_{(a,x)} p_1}{NJ_{(a,x)} p_2} f(a,x)\, da\, dx
\end{align}
Combining \eqref{coarea4} with \eqref{coarea2} we finish the proof.
\end{proof}
Now comes the proof of Theorem \ref{general}.
\begin{proofof}{Theorem \ref{general}}
The following identity is the key point of the proof:
\begin{align}\label{identity}
 \mu(a,x) = \Vert a \Vert\, \frac{NJ_{(a,x)}p_2}{NJ_{(a,x)}p_1},\quad (a,x)\in M\cap R\subset \C{S}_{\txt{top}} 
\end{align}
where $M\subset \C{S}_{\txt{top}}$ and $R\subset \C{S}_{\txt{top}}$ are as in Lemma \ref{lemma2} and Lemma \ref{lemma3} respectively and $\mu(a,x)$, the local condition number of $(a,x)\in \C{S}$, is defined in Definition \ref{condition}.
The proof of the identity comes after we derive the statement of Theorem \ref{general}. 

Applying Lemma \ref{coarea} to the measurable function $f(a,x) = \mu(a,x) e^{-\Vert a\Vert^2/2}/\sqrt{2\pi}^m, (a,x)\in \C{S}$, and using \eqref{identity} we obtain:
\begin{align}
  \mean_{a\sim N(0,1)} \left( \sum\limits_{x\in S^1:\,(a,x)\in \C{S}} \mu(a,x) \right) &= \frac{1}{\sqrt{2\pi}^m}\int\limits_{a\in \K{R}^m} \left(\sum\limits_{x\in S^1:\, (a,x)\in \C{S}} \mu(a,x)\right) e^{-\frac{\Vert a\Vert^2}{2}} da \\
&=\frac{1}{\sqrt{2\pi}^m} \int\limits_{x\in S^1} \int\limits_{a\in p_2^{-1}(x)}\Vert a\Vert e^{-\frac{\Vert a\Vert^2}{2}} da\,dx = (*),
\end{align}
which gives the claimed formula \eqref{general1}.
If $\C{S}$ is scale-invariant with respect to $a\in \K{R}^m$ by Lemma \ref{A:lemma1} we have 
\begin{align}
  (*) = \frac{\Gamma\left(\frac{m}{2}\right)}{2\sqrt{\pi}^m} \int\limits_{x\in S^1} |p_2^{-1}(x)\cap S^{m-1}|\, dx.
\end{align}

Now we turn to the proof of \eqref{identity}.

For $(a,x)\in M\cap R\subset \C{S}_{\txt{top}}$ let $(\dot{a}_0,\dot{x}_0),(\dot{a}_1,0),\dots,(\dot{a}_{m-1},0)$ be an orthonormal basis of $T_{(a,x)} R$ with $(\dot{a}_j,0)\in \ker D_{(a,x)} p_2,\, j=1,\dots,m-1$. Note that $\dot{a}_0\in \K{R}^m, \dot{x}_0\in T_x S^1$ are non-zero since $p_1: M\rightarrow p_1(M), p_2: R\rightarrow p_2(R)$ are submersions and $\dot{a}_0\in \K{R}^m$ is orthogonal to $\dot{a}_j\in \K{R}^m,\, j=1,\dots,m-1$. We compute the normal Jacobians $NJ_{(a,x)} p_1$ and $NJ_{(a,x)} p_2$ using the following orthonormal bases:
\begin{align}
\left\{(\dot{a}_0,\dot{x}_0), (\dot{a}_1,0),\dots,(\dot{a}_{m-1},0)\right\} &\subset T_{(a,x)} \C{S}_{\txt{top}}\\
\left\{\frac{\dot{a}_0}{\Vert \dot{a}_0\Vert}, \dot{a}_1,\dots,\dot{a}_{m-1}\right\} &\subset T_a\K{R}^m\\
\left\{\frac{\dot{x}_0}{\Vert \dot{x}_0\Vert}\right\} &\subset T_x S^1
\end{align}
It is straightforward to see that $NJ_{(a,x)} p_1 = \Vert \dot{a}_0\Vert$ and $NJ_{(a,x)} p_2 = \Vert \dot{x}_0\Vert$ and hence
\begin{align}\label{ratio}
\frac{NJ_{(a,x)} p_2}{NJ_{(a,x)} p_1} = \frac{\Vert \dot{x}_0\Vert}{\Vert \dot{a}_0\Vert}
\end{align}
Since $D_{a}( p_2\circ p_1^{-1})(\dot{a}_j) = D_{(a,x)} p_2\circ D_a p^{-1}_1(\dot{a}_j) = 0$ for $j=1,\dots,m-1$ and since $D_{a}( p_2\circ p_1^{-1})(\dot{a}_0) = D_{(a,x)} p_2\circ D_a p^{-1}_1(\dot{a}_0) = \dot{x}_0$ we obtain
\begin{align}
\mu(a,x) = \Vert a \Vert\,\sup\limits_{\dot a \in \K{R}^m\setminus \{0\}} \frac{\Vert D_a( p_2\circ p^{-1}_1)(\dot a)\Vert}{\Vert\dot a\Vert} = \Vert a\Vert \frac{\Vert \dot{x}_0\Vert}{\Vert \dot{a}_0\Vert}
\end{align}
This together with \eqref{ratio} implies the claimed identity \eqref{identity}.
\end{proofof}

\subsection{Proof of Theorem \ref{pevp general}}
For a $k$-dimensional vector subspace $V\subset M(n,\K{R})$ and for a basis $f=(f_0(\alpha,\beta),\dots,f_d(\alpha,\beta))$ of the space $P_{d,2}$ of binary forms of degree $d\geq 1$ let us define the algebraic variety
\begin{align*}
\C{S}(V,f):=\{(A,x)\in V^{d+1}\times S^1 : \txt{det}(A_0\, f_0(\alpha,\beta) +\dots+A_d\, f_d(\alpha,\beta)) = 0\}
\end{align*}
Theorem \ref{pevp general} follows from the following more general result.
\begin{thm}\label{general basis}
If $\Sigma_V\subset V$ is of codimension one and $f$ is any basis of $P_{d,2}$, then $\C{S}(V,f)$ is non-degenerate and 
\begin{align*}
      \underset{A_0,\dots,A_d\sim i.i.d.\, \mathcal{N}_{V}}{\mean} \left( \sum\limits_{[x]\in \RP:\, (A,x)\in \C{S}(V,f)} \mu(A,x) \right) =  \sqrt{\pi} \frac{\Gamma\left(\frac{(d+1)k}{2}\right)}{\Gamma\left(\frac{(d+1)k-1}{2}\right)}\frac{|\Sigma_V\cap S^{k-1}|}{|S^{k-2}|},
\end{align*}
\end{thm}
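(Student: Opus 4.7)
My plan is to deduce Theorem \ref{general basis} as a direct application of Corollary \ref{cor:projective} to $\C{S}=\C{S}(V,f)\subset V^{d+1}\times S^1\cong\K{R}^m\times S^1$, where $m=(d+1)k$. This requires verifying the hypotheses of Corollary \ref{cor:projective} and then evaluating the fiber integral on its right-hand side explicitly.

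For the hypotheses: scale invariance in $A$ is immediate from $\det$ being homogeneous; invariance under $(A,x)\mapsto(A,-x)$ follows from $f_i(-x)=(-1)^df_i(x)$, which gives $\det(\sum A_if_i(-x))=(-1)^{nd}\det(\sum A_if_i(x))$. For non-degeneracy, condition~1 is where the hypothesis ``$\Sigma_V$ has codimension one'' enters: for any $x\in S^1$ the vector $c(x):=(f_0(x),\dots,f_d(x))\in\K{R}^{d+1}$ is nonzero (since $f$ is a basis of $P_{d,2}$ and some element of $P_{d,2}$ does not vanish at $x$), so the evaluation map $\phi_x:V^{d+1}\to V$, $A\mapsto\sum_i c_i(x)A_i$, is a surjective linear map, and $p_2^{-1}(x)=\phi_x^{-1}(\Sigma_V)$ inherits codimension one from $\Sigma_V$. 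Condition $2'$ follows from the fact that, generically in $A$, the characteristic binary form $(\alpha,\beta)\mapsto\det(\sum A_if_i(\alpha,\beta))$ is a nonzero form of degree $nd$ and hence has only finitely many projective roots.

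For the fiber integral, the key observation is that $|p_2^{-1}(x)\cap S^{m-1}|$ is independent of $x$. Extend $c(x)/\|c(x)\|$ to an orthonormal basis $\{e^{(j)}\}_{j=1}^{d+1}$ of $\K{R}^{d+1}$ and set $B_j:=\sum_i e^{(j)}_i A_i\in V$. This is an orthogonal change of coordinates on $V^{d+1}$ under which $\phi_x(A)=\|c(x)\|B_1$, so $p_2^{-1}(x)=\{B\in V^{d+1}:B_1\in\Sigma_V\}=\Sigma_V\times V^d$, independent of $x$. I then parametrize $S^{m-1}\subset V\oplus V^d$ by the join coordinates $(\theta,u,v)\mapsto(\sin\theta\,u,\cos\theta\,v)$ with $(\theta,u,v)\in[0,\pi/2]\times S^{k-1}\times S^{dk-1}$; the pulled-back metric decomposes orthogonally with norms $1,\sin\theta,\cos\theta$ in the $\theta,u,v$ directions, giving the volume form $\sin^{k-1}\theta\cos^{dk-1}\theta\, d\theta\, d\sigma_{S^{k-1}}(u)\, d\sigma_{S^{dk-1}}(v)$ on $S^{m-1}$. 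Restricting $u$ to the codimension-one subsphere $\Sigma_V\cap S^{k-1}$ removes one $u$-tangent direction, so the induced $(m-2)$-volume form on $p_2^{-1}(x)\cap S^{m-1}$ has its $\sin$-exponent reduced by one, becoming $\sin^{k-2}\theta\cos^{dk-1}\theta\, d\theta\, d\sigma_{\Sigma_V\cap S^{k-1}}(u)\, d\sigma_{S^{dk-1}}(v)$. Using $\int_0^{\pi/2}\sin^{k-2}\theta\cos^{dk-1}\theta\,d\theta=\tfrac12 B(\tfrac{k-1}{2},\tfrac{dk}{2})$ together with $|S^{dk-1}|=2\pi^{dk/2}/\Gamma(dk/2)$ (so the $\Gamma(dk/2)$ factors cancel) yields $|p_2^{-1}(x)\cap S^{m-1}|=\frac{\pi^{dk/2}\,\G{k-1}{2}}{\G{(d+1)k-1}{2}}\,|\Sigma_V\cap S^{k-1}|$.

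Substituting this constant into Corollary \ref{cor:projective} (with $|\RP|=\pi$) and inserting $|S^{k-2}|=2\pi^{(k-1)/2}/\G{k-1}{2}$ so as to display the factor $|\Sigma_V\cap S^{k-1}|/|S^{k-2}|$, the $\pi$-powers collapse to $\sqrt{\pi}$ and the $\Gamma$-factors combine into $\G{(d+1)k}{2}/\G{(d+1)k-1}{2}$, yielding the claimed formula. The main step requiring vigilance is the drop of the $\sin\theta$-exponent from $k-1$ to $k-2$ upon restricting $u$ to $\Sigma_V\cap S^{k-1}$: one has to recognize that this exponent counts transverse $u$-tangent directions (each scaling by $\sin\theta$) rather than being part of the measure on the $u$-sphere; keeping the ambient $\sin^{k-1}\theta$ factor in place would produce the wrong answer, as one can verify against the alternative computation via $\int_{p_2^{-1}(x)}\|A\|e^{-\|A\|^2/2}dA$ in Theorem \ref{general} directly. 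A secondary obstacle is the rigorous verification of condition $2'$, which needs a dimension count showing that the locus of $A$ at which the characteristic pencil fails to be a nonzero binary form has codimension at least two in $V^{d+1}$.
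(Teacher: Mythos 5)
Your overall strategy --- verify non-degeneracy and then feed $\C{S}(V,f)$ into Corollary~\ref{cor:projective} --- is exactly the paper's, and your verification of condition~1 via the orthogonal rotation of $(f_0(x),\dots,f_d(x))$ to a coordinate axis is the same isometry $\C{I}_x$ that the paper uses. Your checks of scale invariance and of the $x\mapsto -x$ symmetry via $f_i(-x)=(-1)^d f_i(x)$ are also correct.

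The one genuine gap is condition~$2'$, which you flag but do not close. The paper's argument is short and worth internalizing: since $f$ and the monomials $\alpha^i\beta^{d-i}$ are both bases of $P_{d,2}$, there is an $h\in GL(d+1)$ with $\alpha^i\beta^{d-i}=\sum_j h_{ij}f_j$; pushing $A$ through the dual change of coordinates $\tilde A$, one has $\sum_j f_j(\alpha,\beta)A_j=\sum_i\alpha^i\beta^{d-i}\tilde A_i$, which is a nonzero binary form as soon as $\det(\tilde A_0)\neq 0$ (evaluate at $(0,1)$) \emph{or} $\det(\tilde A_d)\neq 0$ (evaluate at $(1,0)$). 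So the bad locus is contained in the linear image under $h$ of $\Sigma_V\times V^{d-1}\times\Sigma_V$, which has codimension $2$ in $V^{d+1}$ precisely because $\Sigma_V$ has codimension one; this is the dimension count you correctly suspected was needed.

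Where you genuinely diverge from the paper is the computation of the fiber volume. The paper stays in the Gaussian picture: it invokes Lemma~\ref{A:lemma1} with $q=1$ to trade the factor $\Vert A\Vert$ for the Gamma ratio $\sqrt{2}\,\Gamma(\tfrac{(d+1)k}{2})/\Gamma(\tfrac{(d+1)k-1}{2})$, then integrates out the Gaussian in the $V^d$ directions and uses Lemma~\ref{A:lemma1} once more with $q=0$ to pass from $\int_{\Sigma_V}e^{-\Vert\cdot\Vert^2/2}$ to $|\Sigma_V\cap S^{k-1}|$. You instead compute $|p_2^{-1}(x)\cap S^{m-1}|$ directly via the spherical join parametrization $(\theta,u,v)\mapsto(\sin\theta\,u,\cos\theta\,v)$, observe correctly that restricting $u$ to the hypersurface $\Sigma_V\cap S^{k-1}$ drops the $\sin\theta$ exponent from $k-1$ to $k-2$, and close with a Beta integral. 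Both routes are correct and give the same constant; yours is more geometric and self-contained (no appeal to Lemma~\ref{A:lemma1}), while the paper's is shorter once Lemma~\ref{A:lemma1} is in hand and more uniformly reusable across the paper (the same lemma also drives the proof of Theorem~\ref{general}).
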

\begin{proof}
Observe first that for any $x=(\alpha,\beta)\in S^1$ the vector $f(x)=(f_0(\alpha,\beta),\dots,f_d(\alpha,\beta))$ is non-zero. For any such fixed $x$, let $g=(g_{ij})\in O(d+1)$ be an orthogonal matrix that sends $f(x)$ to $(c,0,\dots,0)\in \K{R}^{d+1}\setminus \{0\}$, where $c\neq 0$ is some constant, i.e., 
\begin{align*}
\sum\limits_{j=0}^d g_{ij} f_j(\alpha,\beta) =
\begin{cases}
c, i=0,\\
0, i=1,\dots, d
\end{cases}
\end{align*}
It is easy to verify that the linear change of coordinates $A_j = \sum_{i=0}^d g_{ij} \tilde{A}_i,\, j=0,\dots,d$ is an isometry of the product space $(V,(\cdot,\cdot))^{d+1}$ and 
\begin{align*}
\sum\limits_{j=0}^d f_j(\alpha,\beta) A_j &= \sum\limits_{j=0}^d f_j(\alpha,\beta) \left(\sum\limits_{i=0}^d g_{ij} \tilde{A}_i\right) = 
 \sum\limits_{i=0}^d\left(\sum\limits_{j=0}^d g_{ij} f_j(\alpha,\beta)\right) \tilde{A}_i = c\, \tilde{A}_0
\end{align*}
Therefore, for $x=(\alpha,\beta)\in S^1$ there is a global isometry $\C{I}_x : (V,(\cdot,\cdot))^{d+1} \rightarrow (V,(\cdot,\cdot))^{d+1}$ that sends the fiber $p_2^{-1}(x) = \{A\in V^{d+1}: \txt{det}(A_0\,f_0(\alpha,\beta)+\dots+ A_d\,f_d(\alpha,\beta))=0\}$ to $\{\tilde{A}\in V^{d+1}: \txt{det} (\tilde{A}_0) = 0\} = \Sigma_V \times V^d$. In particular, under the assumption $\txt{dim}(\Sigma_V) = k-1$ we have that $p_2^{-1}(x)$ is of codimension one in $V^{d+1}$ and hence condition $(1)$ in Definition \ref{def:S} is satisfied. 

Since both $f_0(\alpha,\beta),\dots, f_d(\alpha,\beta)$ and $\alpha^0\beta^d,\dots,\alpha^d\beta^0$ are bases of $P_{d,2}$ for some ${h=(h_{ij})\in GL(d+1)}$ we have $\alpha^i \beta^{d-i} = \sum_{j=0}^d h_{ij} f_j(\alpha,\beta),\, i=0,\dots,d$. Let us define $B = \{A\in V^{d+1}: A_j = \sum\limits_{i=0}^d h_{ij}\tilde A_i,\, j=0,\dots,d,\ \txt{det}(\tilde A_0) = \txt{det}(\tilde A_d) = 0\}$. Since $\txt{dim}(\Sigma_V) = k-1$ and since $h$ is a non-degenerate transformation the algebraic subset $B\subset V^{d+1}$ has codimension $2$. For $A\notin B$ the matrix  
\begin{align*}
\sum\limits_{j=0}^d f_j(\alpha,\beta) A_j = \sum\limits_{j=0}^d f_j(\alpha,\beta) \left(\sum\limits_{i=0}^d h_{ij} \tilde A_i\right) = \sum\limits_{i=0}^d\left(\sum\limits_{j=0}^d h_{ij} f_j(\alpha,\beta)\right) \tilde A_i = \sum\limits_{i=0}^d \alpha^i\beta^{d-i} \tilde{A}_i
\end{align*}
is non-degenerate at $(\alpha:\beta)=(0,1)$ (at $(\alpha ,\beta)=(1,0)$) if $\txt{det}(\tilde{A}_0)\neq 0$ (if $\txt{det}(\tilde{A}_d)\neq 0$, respectively) and hence the binary form $\txt{det}(A_0 f_0(\alpha,\beta) + \dots+ A_d f_d(\alpha,\beta))$ is non-zero. Consequently, the fiber $p_1^{-1}(A)= \{x\in S^1: \txt{det}(A_0 f_0(\alpha,\beta)+ \dots + A_d f_d(\alpha,\beta))= 0\}$ is finite for any $A\notin B$ and condition $(2^{\prime})$ in Definition \ref{def:S} is satisfied. Applying Corollary \ref{cor:projective} to  $\C{S}(V,f)\subset \K{R}^{(d+1)k} \times S^1,\, \K{R}^{(d+1)k} \simeq V^{d+1}$ we obtain
\begin{align}
  \hspace{-4pt}\underset{A_0,\dots, A_d\sim i.i.d.\, \mathcal{N}_{V}}{\mean} \hspace{-2pt}\left( \sum\limits_{[x]\in \RP:\, (A,x)\in \C{S}(V,f)} \mu(A,x) \right)\hspace{-2pt} = \hspace{-1pt}\frac{1}{\sqrt{2\pi}^{(d+1)k}}\hspace{-1pt} \int\limits_{[x]\in \RP} \int\limits_{A\in p_2^{-1}(x)}\Vert A\Vert e^{-\frac{\Vert A\Vert^2}{2}} dA\,dx,
\end{align}
where $\Vert A\Vert^2 = \Vert A_0\Vert^2+\dots+\Vert A_d\Vert^2$. 
Since each fiber $p_2^{-1}(x), x\in S^1$ is an algebraic subset of $\K{R}^{(d+1)k}$ of codimension one we have by Lemma \ref{A:lemma1}
\begin{align}
\int\limits_{A\in p_2^{-1}(x)}\Vert A\Vert e^{-\frac{\Vert A\Vert^2}{2}} dA = \sqrt{2}\, \frac{\G{(d+1)k}{2}}{\G{(d+1)k-1}{2}} \int\limits_{A\in p_2^{-1}(x)} e^{-\frac{\Vert A\Vert^2}{2}}	 dA
\end{align}
Performing the isometric change of coordinates $\C{I}_x: (V,(\cdot,\cdot))^{d+1} \rightarrow (V,(\cdot,\cdot))^{d+1}$ that was constructed above we write the last integral as follows:
\begin{align}
\int\limits_{A\in p_2^{-1}(x)} e^{-\frac{\Vert A\Vert^2}{2}} dA &= \int\limits_{\{\tilde A \in V^{d+1} :\, \txt{det}(\tilde{A}_0) = 0\}} e^{-\frac{\Vert \tilde A_0\Vert^2}{2}} e^{-\frac{\Vert \tilde A_1\Vert^2}{2}}\dots e^{-\frac{\Vert \tilde A_d\Vert^2}{2}} d\tilde{A}_0 d\tilde{A}_1\dots d\tilde{A}_d\\
&= \sqrt{2\pi}^{\,dk} \int\limits_{\tilde{A}_0\in \Sigma_V} e^{-\frac{\Vert \tilde{A}_0\Vert^2}{2}} d\tilde{A}_0 = \sqrt{2\pi}^{\,dk} \sqrt{2}^{\,k-3} \G{k-1}{2} |\Sigma_V\cap S^{k-1}|,
\end{align}
where in the last step Lemma \ref{A:lemma1} has been used. Collecting everything together we write
\begin{align}
 \underset{A_0,\dots, A_d\sim i.i.d.\, \mathcal{N}_{V}}{\mean} \left( \sum\limits_{[x]\in \RP:\, (A,x)\in \C{S}(V,f)} \mu(A,x) \right) &= \sqrt{\pi}\frac{\G{(d+1)k}{2}}{\G{(d+1)k-1}{2}} \frac{\G{k-1}{2}}{2\sqrt{\pi}^{k-1}} |\Sigma_V\cap S^{k-1}|\\
&= \sqrt{\pi} \frac{\G{(d+1)k}{2}}{\G{(d+1)k-1}{2}} \frac{|\Sigma_V\cap S^{k-1}|}{|S^{k-2}|},
\end{align} 
since $|S^{k-2}| = 2\sqrt{\pi}^{k-1}/\G{k-1}{2}$. This completes the proof.
\end{proof}

\begin{proofof}{Theorem \ref{pevp general}}
Taking $f_i(\alpha,\beta) = \alpha^i\beta^{d-i},\, i=0,\dots,d$ in Theorem \ref{general basis} we obtain the claim of Theorem \ref{pevp general}.
\end{proofof}

\section{Applications of main results}\label{applications}
In this section we derive Theorem \ref{gaussian} and Corollaries \ref{bound}, \ref{GOE}. 
\begin{proofof}{Corollary \ref{bound}}
Applying Poincar\'e's formula \cite[(3-5)]{Howard} to the projective hypersurface $\txt{P}\Sigma_V\subset \txt{P}V\simeq \K{R}\txt{P}^{k-1}$ and a projective line $\ell \in \mathbb{G}(1,k-1)$ we obtain
\begin{align}
\frac{|\Sigma_V\cap S^{k-1}|}{|S^{k-2}|} = \frac{|\txt{P}\Sigma_V|}{|\K{R}\txt{P}^{k-2}|} = \underset{\ell\in \mathbb{G}(1,k-1)}{\mean}\#(\txt{P}\Sigma_V\cap \ell) \leq \txt{deg}(\txt{P}\Sigma_V) = n
\end{align}
which together with \eqref{eq:pevp general} implies the claimed bound \eqref{eq:bound}. 
\end{proofof}
In case of any particular space $V\subset M(n,\K{R})$ satisfying $\txt{dim}(\Sigma_V) = k-1 = \txt{dim}(V)-1$ by Theorem \ref{pevp general} explicit computation of the expected condition number for polynomial eigenvalues amounts to computing the volume of the hypersurface $\Sigma_V\cap S^{k-1}$. In cases $V=M(n,\K{R})$ and $V=Sym(n,\K{R})$ formulas for the volume of $\Sigma_V\cap S^{k-1}$ were found in \cite{EKS94} and \cite{LerLun16} respectively. 
\begin{proofof}{Theorem \ref{gaussian}}
Formula from \cite{EKS94} reads
\begin{align}
\frac{|\Sigma_{M(n,\K{R})}\cap S^{n^2-1}|}{|S^{n^2-2}|} = \sqrt{\pi}\frac{\G{n+1}{2}}{\G{n}{2}}
\end{align}
Plugging it in \eqref{eq:pevp general} for $V=M(n,\K{R}), k=\txt{dim}(V)=n^2$ leads to
\begin{align}
      \underset{A_0,\dots,A_d\sim i.i.d.\, \mathcal{N}_{M(n,\K{R})}}{\mean}\, \mu(A) &= \pi \frac{\Gamma\left(\frac{(d+1)n^2}{2}\right)}{\Gamma\left(\frac{(d+1)n^2-1}{2}\right)} \frac{\G{n+1}{2}}{\G{n}{2}} \\
      &= \frac{\pi}{2} \sqrt{(d+1)n^3}\left(1 + \C{O}\left(\frac{1}{n}\right)\right),\ n\rightarrow +\infty,
\end{align}
where the asymptotic is obtained using formula $(1)$ from \cite{TriErd51}.
\end{proofof}
\begin{proofof}{Corollary \ref{GOE}}
In \cite{LerLun16} it was proved that 
\begin{align}\label{eq:volume even}
\frac{|\Sigma_{Sym(n,\K{R})}\cap S^{\frac{n(n+1)}{2}-1}|}{|S^{\frac{n(n+1)}{2}-2}|} = 
\sqrt{\frac{2}{\pi}}n \frac{\G{n+1}{2}}{\G{n+2}{2}}
\end{align}
for even $n$ and 
\begin{align}\label{eq:volume odd}
\frac{|\Sigma_{Sym(n,\K{R})}\cap S^{\frac{n(n+1)}{2}-1}|}{|S^{\frac{n(n+1)}{2}-2}|}=\frac{(-1)^m\sqrt{\pi}n!}{2^n m!\G{n+2}{2}} \left(1-\frac{4\sqrt{2}}{\sqrt{\pi}}\sum\limits_{i=0}^{m-1} (-1)^i \frac{\G{2i+3}{2}}{i!}\right)
\end{align}
for odd $n=2m+1$. Plugging \eqref{eq:volume even} and \eqref{eq:volume odd} in \eqref{eq:pevp general} for $V=Sym(n,\K{R}), k=\frac{n(n+1)}{2}$ leads to explicit formulas for the expected condition number (see \eqref{eq:GOE} in case of even $n$). In \mbox{\cite[Remark 3]{LerLun16}} it was shown that 
\begin{align}
\frac{|\Sigma_{Sym(n,\K{R})}\cap S^{\frac{n(n+1)}{2}-1}|}{|S^{\frac{n(n+1)}{2}-2}|}=\frac{2\sqrt{n}}{\sqrt{\pi}}\left(1+\C{O}\left(\frac{1}{\sqrt{n}}\right)\right),\ n\rightarrow +\infty
\end{align}
regardless parity of $n$. This leads to the asymptotic
\begin{align}
\underset{A_0,\dots,A_d\sim i.i.d.\, GOE(n)}{\mean}\, \mu(A) &=  \sqrt{\pi} \frac{\Gamma\left(\frac{(d+1)n(n+1)}{4}\right)}{\Gamma\left(\frac{(d+1)n(n+1)-2}{4}\right)}\frac{|\Sigma_{Sym(n,\K{R})}\cap S^{\frac{n(n+1)}{2}-1}|}{|S^{\frac{n(n+1)}{2}-2}|}\\
           &=\sqrt{(d+1)n^3}\left(1+\C{O}\left(\frac{1}{\sqrt{n}}\right)\right),\ n\rightarrow +\infty,
\end{align} 
where we again used formula $(1)$ from \cite{TriErd51} for the asymptotic of the ratio of two Gamma functions. 
\end{proofof}

\section*{Appendix}
In the following proposition we show that the conditions $(2)$ and $(2^{\prime})$ in Definition \ref{def:S} of a non-degenerate semialgebraic set $\C{S}\subset \K{R}^m\times S^1$ are equivalent.
\begin{propo}\label{equivalence}
Let $\C{S}\subset \K{R}^m\times S^1$ be a semialgebraic subset of dimension $m$. Then

$(2)$ the semialgebraic set $\Sigma_1\subset \C{S}_{\txt{top}}$ of critical points of the first projection $p_1:\C{S}_{\txt{top}} \rightarrow \K{R}^m$ is at most $(m-1)$-dimensional if and only if 

$(2^{\prime})$
there exists a semialgebraic subset $B\subset \K{R}^m$ of dimension at most $m-2$ such that for any $a\notin B$ the fiber $p_1^{-1}(a)$ is finite.
\end{propo}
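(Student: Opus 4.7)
The plan is to prove the two implications separately, with Corollary \ref{cor:Hardt} doing most of the work.

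For $(2) \Rightarrow (2')$, I would take $B = \{a \in \K{R}^m : \txt{dim}\,p_1^{-1}(a) \geq 1\}$, which is precisely the set of $a$ with infinite fiber (since zero-dimensional semialgebraic sets are finite) and is semialgebraic by Corollary \ref{cor:Hardt}. I then write $B = B_1 \cup B_2$ according to whether the fiber has positive dimension inside $\C{S}_{\txt{top}}$ or inside $\C{S}_{\txt{low}}$. The bound $\txt{dim}\,B_2 \leq m-2$ is immediate from Corollary \ref{cor:Hardt} applied to $p_1|_{\C{S}_{\txt{low}}}$ together with $\txt{dim}\,\C{S}_{\txt{low}} \leq m-1$. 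For $B_1$ the key step is the inclusion $B_1 \subset \{a : \txt{dim}(p_1^{-1}(a) \cap \Sigma_1) \geq 1\}$: whenever $p_1^{-1}(a) \cap \C{S}_{\txt{top}}$ has positive dimension, its top-dimensional smooth stratum is a positive-dimensional submanifold of $\C{S}_{\txt{top}}$ along which $p_1$ is constant, so its tangent spaces lie in $\ker Dp_1$, forcing the whole stratum into $\Sigma_1$. Then Corollary \ref{cor:Hardt} applied to $p_1|_{\Sigma_1}$ together with $\txt{dim}\,\Sigma_1 \leq m-1$ yields $\txt{dim}\,B_1 \leq m-2$.

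For $(2') \Rightarrow (2)$, I would argue by contradiction: suppose $\txt{dim}\,\Sigma_1 = m$, so $\Sigma_1$ contains an open subset $U$ of the $m$-dimensional manifold $\C{S}_{\txt{top}}$. By upper semi-continuity of the rank (plus semialgebraicity) $p_1$ has constant rank $r$ on some dense open $W \subset U$, with $r < m$ since every point of $W$ is critical. The crucial geometric observation is that fibers of the ambient projection $p_1 : \K{R}^m \times S^1 \to \K{R}^m$ lie inside the slices $\{a\} \times S^1$ of dimension one, so fibers of $p_1|_W$ have dimension at most one; but the constant rank theorem forces them to have dimension exactly $m - r$. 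Hence $r \geq m-1$, and combined with $r < m$ this pins $r = m-1$. Consequently $p_1(W)$ is $(m-1)$-dimensional and is therefore not contained in the $(m-2)$-dimensional set $B$, so one can pick $a \in p_1(W) \setminus B$; the fiber $p_1^{-1}(a)$ then contains the $1$-dimensional fiber of $p_1|_W$ over $a$, contradicting condition $(2')$.

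The main obstacle is locking the rank to exactly $r = m-1$ in the second implication. Without the low-dimensionality of the $S^1$-factor one would only get $r < m$, which leaves room for $\txt{dim}\,p_1(W)$ to be arbitrarily small and hence potentially swallowed by $B$; it is precisely the one-dimensionality of the target of $p_2$ that precludes this. The remaining bookkeeping, namely that the various sets involved are semialgebraic and that the generic-rank/stratification arguments apply, is routine given the tools recalled in Section \ref{preliminaries}.
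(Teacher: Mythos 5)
Your proposal is correct. The first implication $(2)\Rightarrow(2')$ is essentially the same route as the paper: decompose $B$ into the part $B_1$ coming from positive-dimensional fibers in $\C{S}_{\txt{top}}$ and the part $B_2$ coming from $\C{S}_{\txt{low}}$, then bound each by Corollary~\ref{cor:Hardt}. The paper handles $B_1$ slightly differently --- it first bounds $\dim p_1^{-1}(\sigma_1)\leq m-1$ (where $\sigma_1=p_1(\Sigma_1)$) and then applies Corollary~\ref{cor:Hardt} to $p_1\colon p_1^{-1}(\sigma_1)\to\sigma_1$ --- whereas you pass directly to $\Sigma_1$ via the pointwise observation that any positive-dimensional fiber piece inside $\C{S}_{\txt{top}}$ has its tangent line in $\ker Dp_1$ and hence sits inside $\Sigma_1$. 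Both work; yours is a touch more geometric and bypasses the auxiliary set $p_1^{-1}(\sigma_1)$.

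The second implication $(2')\Rightarrow(2)$ is where you genuinely depart from the paper. The paper again leans on Corollary~\ref{cor:Hardt}: if $\dim\Sigma_1=m$, then since fibers of $p_1|_{\Sigma_1}$ are at most one-dimensional (being subsets of $\{a\}\times S^1$), the set of $a$ with one-dimensional fiber inside $\Sigma_1$ must be $(m-1)$-dimensional by a dimension count on the two pieces of the Hardt partition, and this set consists of points with infinite fibers, contradicting $(2')$. You instead invoke the constant rank theorem on a generic-rank open subset $W$ of $\Sigma_1$: the $\leq 1$-dimensionality of fibers forces the generic rank to equal exactly $m-1$, hence $p_1(W)$ is $(m-1)$-dimensional and cannot be swallowed by the $(m-2)$-dimensional $B$, yielding an $a\notin B$ with a one-dimensional fiber. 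Both arguments ultimately exploit the same structural fact --- the one-dimensionality of the $S^1$ factor --- but the paper converts it into a Hardt dimension count while you convert it into a rank computation. Your route uses the additional (but standard for Nash/semialgebraic manifolds) fact that the rank is generically maximal on each component, which you should state a little more carefully than ``upper semi-continuity'': the rank function is \emph{lower} semi-continuous and what you actually need is that the locus of non-maximal rank is a proper semialgebraic (indeed analytic) subset of each component, hence nowhere dense. With that small clarification the argument is sound.
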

\begin{proof}
$(2) \Rightarrow (2^{\prime})$ By Sard's theorem the semialgebraic set $\sigma_1=p_1(\Sigma_1)\subset \K{R}^m$ of critical values of $p_1:\C{S}_{\txt{top}} \rightarrow \K{R}^m$ is of dimension $\leq m-1$. The set $p_1^{-1}(\sigma_1)\subset \C{S}$ of critical fibers is also of dimension $\leq m-1$. Indeed, if it was $m$-dimensional there would exist a nonempty open set $U\subset p_1^{-1}(\sigma_1)\setminus (\Sigma_1 \cup \C{S}_{\txt{low}})$ of regular points of $p_1$. The image $p_1(U)\subset \sigma_1$ of $U$ is open in $\K{R}^m$ which contradicts to $\txt{dim}(\sigma_1)\leq m-1$.

For the map $p_1: p_1^{-1}(\sigma_1) \rightarrow \sigma_1$ define 
$B_1:=\{a\in \sigma_1: \txt{dim}(p_1^{-1}(a)) =1\}$, the semialgebraic set of points in $\sigma_1$ for which the fiber $p_1^{-1}(a)$ is infinite. Since $\txt{dim}(p_1^{-1}(\sigma_1))\leq m-1$ Corollary \ref{Hardt} implies that $\txt{dim}(B_1)\leq m-2$.

Similarly, for the map $p_1: \C{S}_{\txt{low}} \rightarrow p_1(\C{S}_{\txt{low}})$ let us define $B_2:=\{a\in p_1(\C{S}_{\txt{low}}) : \txt{dim}(p_1^{-1}(a)\cap \C{S}_{\txt{low}})=1\}$, the semialgebraic set of points in $p_1(\C{S}_{\txt{low}})$ for which the fiber $p_1^{-1}(a)\cap \C{S}_{\txt{low}}$ is infinite. Since $\txt{dim}(\C{S}_{\txt{low}})\leq m-1$ Corollary \ref{Hardt} gives $\txt{dim}(B_2) \leq m-2$.

Take now any $a\notin B_1\cup B_2$. If $a\in \sigma_1$ the fiber $p_1^{-1}(a)$ is finite since $a\notin B_1$. If $a \notin \sigma_1$ it's a regular point of the map $p_1: \C{S}_{\txt{top}} \rightarrow \K{R}^m$ between two $m$-dimensional manifolds. Therefore the semialgebraic set $p_1^{-1}(a)\cap \C{S}_{\txt{top}}$ is zero-dimensional manifold and hence it's finite. The set $p_1^{-1}(a)\cap \C{S}_{\txt{low}}$ is finite because $a\notin B_2$. Consequently, the fiber $p_1^{-1}(a) =( p_1^{-1}(a)\cap \C{S}_{\txt{top}}) \cup (p_1^{-1}(a)\cap \C{S}_{\txt{low}})$ is finite for any point $a\notin B$ out of the at most $(m-2)$-dimensional semialgebraic subset $B:=B_1\cup B_2\subset \K{R}^m$.

$(2)\Leftarrow (2^{\prime})$ Recall that $\txt{dim}(\sigma_1)\leq m-1$ and let us consider the map $p_1: \Sigma_1 \rightarrow \sigma_1$. If $\Sigma_1$ was $m$-dimensional the semialgebraic set $B:=\{a\in \sigma_1: \txt{dim}(p_1^{-1}(a)\cap \Sigma_1) = 1\}$,  by Corollary \ref{Hardt}, would be $(m-1)$-dimensional, which would contradict to $(2^{\prime})$. 
\end{proof}
The following elementary lemma is frequently used throughout Section \ref{main}.
\begin{lemma}\label{A:lemma1}
If $X\subset (\K{R}^m,\Vert\cdot\Vert)$ is a scale-invariant semialgebraic variety of dimension $p\leq m$ and $q>0$, then 
\small\begin{align}\label{A:integral}
\int\limits_{a\in X}  \Vert a\Vert^q \,e^{-\frac{\Vert a\Vert^2}{2}} da =  \sqrt{2}^{\,p+q-2}\Gamma\left(\frac{p+q}{2}\right) |X\cap S^{m-1}| =\sqrt{2}^{\,q} \frac{\Gamma\left(\frac{p+q}{2}\right)}{\Gamma\left(\frac{p}{2}\right)} \int\limits_{a\in X}  e^{-\frac{\Vert a\Vert^2}{2}} da, 
\end{align}
where $|X\cap S^{m-1}|$ denotes the volume of the $(r-1)$-dimensional semialgebraic spherical set $X\cap S^{m-1}$.
\end{lemma}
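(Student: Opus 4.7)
The strategy is a direct polar‑coordinate computation: scale invariance of $X$ lets us decompose the $p$‑dimensional integral over $X$ into a product of a radial integral on $(0,+\infty)$ and the $(p-1)$‑dimensional spherical integral over $X\cap S^{m-1}$.

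First I would restrict attention to the smooth top‑dimensional stratum $X_{\txt{top}}\subset X$; since $X_{\txt{low}}=X\setminus X_{\txt{top}}$ has dimension at most $p-1$ it is $p$‑negligible and can be discarded from the integral. Scale invariance is inherited by $X_{\txt{top}}$, so the map
\[
\Phi:(0,+\infty)\times (X_{\txt{top}}\cap S^{m-1})\longrightarrow X_{\txt{top}}\setminus\{0\},\qquad (r,\omega)\longmapsto r\omega
\]
is a diffeomorphism of smooth $p$‑dimensional manifolds, and $X_{\txt{top}}\cap S^{m-1}$ is a smooth $(p-1)$‑dimensional submanifold of the sphere. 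Note that $X\cap S^{m-1}$ and $X_{\txt{top}}\cap S^{m-1}$ differ by a set of dimension at most $p-2$, hence have the same $(p-1)$‑volume.

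Second I would verify that under $\Phi$ the induced $p$‑dimensional Riemannian volume element on $X_{\txt{top}}$ pulls back to $r^{p-1}\,dr\,d\sigma(\omega)$, where $d\sigma$ denotes the $(p-1)$‑volume on $X_{\txt{top}}\cap S^{m-1}$. This is the standard polar Jacobian: at a point $r\omega$ the radial direction $\omega$ is orthogonal to $T_\omega(X_{\txt{top}}\cap S^{m-1})$ and $D\Phi$ acts as the identity on the radial direction and as multiplication by $r$ on each of the $p-1$ spherical directions, giving total stretch factor $r^{p-1}$. Fubini then yields
\[
\int_{X}\|a\|^q e^{-\|a\|^2/2}\,da\;=\;|X\cap S^{m-1}|\int_{0}^{\infty}r^{p+q-1}e^{-r^2/2}\,dr.
\]

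Third, the substitution $u=r^2/2$ transforms the radial integral into $2^{(p+q)/2-1}\Gamma((p+q)/2)=\sqrt{2}^{\,p+q-2}\Gamma((p+q)/2)$, which establishes the first equality in \eqref{A:integral}. The second equality follows at once by applying the first equality with $q$ replaced by $0$ to compute $\int_X e^{-\|a\|^2/2}\,da=\sqrt{2}^{\,p-2}\Gamma(p/2)\,|X\cap S^{m-1}|$ and taking the ratio.

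The only potentially subtle step is the second one — justifying the polar decomposition and identifying the Jacobian on a semialgebraic, possibly singular set — but this is routine once one works on the smooth stratum $X_{\txt{top}}$, since $X_{\txt{low}}$ contributes zero to a $p$‑dimensional integral. No nontrivial obstacle is expected.
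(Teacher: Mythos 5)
Your proof is correct and uses essentially the same idea as the paper: a polar decomposition of the integral over the smooth top-dimensional stratum, reducing it to a radial Gamma integral times the spherical volume $|X\cap S^{m-1}|$. The paper phrases this via the smooth coarea formula applied to the radial projection $a\mapsto a/\|a\|$ (whose normal Jacobian is $1/\|a\|^{p-1}$), while you use the inverse parametrization $\Phi(r,\omega)=r\omega$ and the change-of-variables formula with Jacobian $r^{p-1}$; these are the same computation stated in mirror form.
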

\begin{proof}
By the smooth coarea formula \cite[(A-2)]{Howard} applied to the submersion $\pi: X_{\txt{top}} \rightarrow X_{\txt{top}}\cap S^{m-1}$, $\pi(a)=a/\|a\|$ whose Normal Jacobian is $1/\|a\|^{p-1}$ we have:
\begin{align}
\int\limits_{a\in X} \Vert a\Vert^q \,e^{-\frac{\Vert a\Vert^2}{2}} da = \int\limits_{0}^{+\infty} r^{p+q-1} e^{-\frac{r^2}{2}} dr\ |X\cap S^{m-1}| = \sqrt{2}^{\,p+q-2}\Gamma\left(\frac{p+q}{2}\right) |X\cap S^{m-1}|
\end{align}
Combining this with the same formula for $q=0$ we obtain the second equality in \eqref{A:integral}.
\end{proof}

\bibliographystyle{plain}

\bibliography{references}

\noindent

\bigskip{\footnotesize
\textsc{Universidad de Cantabria, Av. de los Castros, s/n 39005 Santander, Spain}\par
\textit{E-mail address:} \texttt{beltranc@unican.es}
}

\bigskip{\footnotesize
\textsc{SISSA, via Bonomea 265, 34136 Trieste, Italy}\par  
\textit{E-mail address:} \texttt{kkozhasov@sissa.it}
}

\end{document}